\newcommand{\N}{\mathbb{N}}
\newcommand{\Z}{\mathbb{Z}}
\newcommand{\Q}{\mathbb{Q}}
\newcommand{\R}{\mathbb{R}}
\newcommand{\C}{\mathbb{C}}
\theoremstyle{plain}
\newtheorem{thm}{Theorem}[section]
\newtheorem{cor}[thm]{Corollary}
\newtheorem{lem}[thm]{Lemma}
\newtheorem{prop}[thm]{Proposition}
\newtheorem{con}[thm]{Conjecture}
\theoremstyle{definition}
\newtheorem{defi}[thm]{Definition}
\newtheorem{exmp}[thm]{Example}
\newtheorem{quest}[thm]{Question}
\newtheorem{rem}[thm]{Remark}
\begin{document}

\title{Radial Limits of the Universal Mock Theta Function \large $g_3$}
\author{Min-Joo Jang and Steffen L\"obrich}
\address{Mathematical Institute, University of Cologne, Weyertal 86-90, 50931 Cologne, \linebreak Germany}
\email{min-joo.jang@uni-koeln.de}
\email{steffen.loebrich@uni-koeln.de}
\date{\today}

\begin{abstract}
Referring to Ramanujan's original definition of a mock theta function, Rhoades asked for explicit formulas for radial limits of the universal mock theta functions $g_2$ and $g_3$. Recently, Bringmann and Rolen found such formulas for specializations of $g_2$. Here we treat the case of $g_3$, generalizing radial limit results for the rank generating function of Folsom, Ono, and Rhoades. Furthermore, we find expressions for radial limits of fifth order mock theta functions different from those of Bajpai, Kimport, Liang, Ma, and Ricci. 
\end{abstract}

\maketitle

\section{Introduction}

In his last letter to Hardy, Ramanujan provided 17 examples for what he called a ``mock theta function''. These are given by $q$-hypergeometric series that have the same growth behavior as classical modular forms as one radially approaches roots of unity from inside the unit disc. However, he conjectured that one single modular form is not enough to cut out all the poles at roots of unity. More precisely, Ramanujan defined mock theta functions as follows (cf. \cite{orgdef}, p.63).

\begin{defi}[Ramanujan]\label{ramdef}
A mock theta function is a function $F(q)$, defined by a $q$-series which converges for $|q|<1$, satisfying the following conditions:
\begin{itemize}
	\item[(i)] There are infinitely many roots of unity $\zeta$ such that $F(q)$ grows exponentially as $q$ approaches $\zeta$ radially from inside the unit disc.
	\item[(ii)] For every root of unity $\zeta$, there exists a weakly holomorphic modular form $M_{\zeta}(q)$ and a rational number $\alpha$, such that $F(q)-q^{\alpha}M_{\zeta}(q)$ is bounded as $q$ approaches $\zeta$ radially from inside the unit disc.
	\item[(iii)] There does not exist a single weakly holomorphic modular form $M(q)$ and a rational number $\alpha$, such that for every root of unity $\zeta$, the difference $F(q)-q^{\alpha} M(q)$ is bounded as $q$ approaches $\zeta$ radially from inside the unit disc.
\end{itemize}
\end{defi}
Griffin, Ono, and Rolen \cite{gor} proved that all of Ramanujan's examples satisfy Definition \ref{ramdef}. Rhoades \cite{rh} asked if one can choose the set $\{M_{\zeta}(q)\}_{\zeta}$ to be finite and can also give explicit expressions for the radial limits in (ii). In particular, he asked this for certain specializations of the universal mock theta functions $g_2 (x,q)$ and $g_3 (x,q)$, since every mock theta function can be expressed as a linear combination of such specializations, up to a rational $q$-powers, and modular forms. The problem has been solved for special mock theta functions before: Folsom, Ono, and Rhoades gave a closed expression for the radial limits of Ramanujan's third order mock theta function $f$ (Theorem 1.1 of \cite{for}) as well as a partial answer for Rhoades' question for certain specializations of $g_3$ (Theorem 1.2 of \cite{for}, see also Remark \ref{forrem}). Moreover, in \cite{bklmr}, the radial limits of Ramanujan's fifth order mock theta functions were computed using bilateral series.\\

Bringmann and Rolen \cite{br} gave a positive answer to Rhoades' question for specializations of the even order universal mock theta function $g_2$, i.e., functions of the form $F(q)=g_2 (\zeta_b ^a q^A , q^B)$ for $a,b,A,B\in\N$. Linear combinations of such functions and modular forms comprise all of Ramanujan's mock theta functions up to rational $q$-powers. They showed that at most four modular forms (including the form identically equal to $0$) are needed for such $F$ to keep the radial limits bounded. For this they used a bilateral series identity for $g_2$ found by Mortenson \cite{mor} together with a careful analysis of zeros and poles of $g_2$ for case-by-case estimates. In this paper, we study the odd order universal mock theta function $g_3$, defined as
\begin{equation*}
g_3 (x,q) :=\sum_{n\geq 1}\frac{q^{n(n-1)}}{(x, q/x ; q)_{n}}
\end{equation*}
(see Section 3 for a definition of the $q$-Pochhammer symbols $(a_1,\dots,a_m ;q)_n$).
Throughout we write $\zeta_k ^h$ for the root of unity $e^{2\pi i \frac{h}{k}}$. In this case, Rhoades' question for $g_3$ can be stated as follows: 

\begin{quest}[Rhoades \cite{rh}, \S 3, 3.4.]\label{rq}
For every $a,b,A,B\in\N$ with $(a,b)=1$, can one find for every $k,h\in \N$ with $(k,h)=1$ a weakly holomorphic modular form $M_{k,h}(q)$ satisfying (ii) of Definition \ref{ramdef}, such that the set $\{M_{k,h} \}_{k,h}$ is finite, and explicitly compute the radial limits 
\begin{equation*}
Q_{a,b,A,B,h,k}:=\lim_{q\rightarrow \zeta_k ^k} \left(g_3(\zeta_b ^a q^A, q^B)-M_{k,h}(q)\right)?
\end{equation*}
\end{quest}

\begin{rem}
Note that while for every root of unity $\zeta_k ^h$ one can choose a lot of modular forms $M_{k,h}(q)$ matching condition (ii) of Definition \ref{ramdef}, Choi, Lim, and Rhoades \cite{clr} showed that the expressions $Q_{a,b,A,B,h,k}$ are uniquely determined. Moreover, they proved that for every $a,b,A,B\in \N$, the function mapping $\frac{h}{k}\in\Q$ to $Q_{a,b,A,B,h,k}$ is a quantum modular form of weight $\tfrac{1}{2}$ whose cocycles extend to a real analytic function on $\R$, except possibly at one point (for an introduction to quantum modular forms, see \cite{zag}). Bringmann and Rolen constructed such quantum modular forms in a different guise in \cite{br2}.
\end{rem}

A positive answer to Question \ref{rq} follows from the work of Bringmann and Rolen on $g_2$ \cite{br}, since $g_3$ can be expressed in terms of $g_2$ (cf.~\cite{gmi}, eq.~(6.1)). However, this relation is quite involved, so finding suitable modular forms and computing the radial limits in the $g_3$-case can be laborious with this method. We affirm Rhoades' question for most of the choices for the parameters $a,b,A,B,h,k$ directly by constructing the set $\{M_{k,h}\}_{k,h}$ and giving explicit expressions for the radial limits. For this we need an identity analogous to Mortenson's (\cite{mor}, Corollary 5.2.) as well as a relation between certain $g_3$-values by Kang (\cite{kang}, Theorem 1.3). In doing so, we generalize Theorem 1.2 of \cite{for} and find a different proof for it. We also obtain non-trivial identities for roots of unity as a side product. However, for a few special choices of the parameters, we could not find a direct answer.\\

The paper is organized as follows: In Section 2 we will state our results explicitly. For this we have to distinguish several cases depending on the behavior of $g_3$ at the root of unity in question. In Section 3 we recall some definitions and identities needed for the proofs, which will be given in Section 4.

\section*{Acknowledgements}
This paper will be part of the first author's PhD thesis and was supported by the Deutsche Forschungsgemeinschaft
(DFG) Grant No.~BR 4082/3-1, and partially written in the context of the Cologne Young Researchers in Number Theory Program 2015, supported by University of Cologne postdoc DFG Grant D-72133-G-403-15100101 funded under the Institutional Strategy of the University of Cologne within the German Excellence Initiative. First of all, we would like to thank Prof.~Dr.~Kathrin Bringmann and Dr.~Larry Rolen for suggesting the problem and giving us many helpful instructions and remarks. A good portion of this paper is based on their ideas. Furthermore, we are grateful to Prof.~Dr.~Wadim Zudilin for many valuable remarks and a profitable conversation. We would also like to thank Dr.~Michael Griffin for his advice and Dr.~Eric Mortenson, Michael Somos, and the referee for useful correspondence.

\section{Statement of Results}

\begin{rem}\label{modu}
Throughout this paper we slightly abuse the term ``modular form''. To be precise, we mean by a \emph{modular form} a weakly holomorphic modular form (with respect to the $\tau$-variable) of weight $1/2$ for some congruence subgroup and some multiplier, up to multiplication by a rational $q$-power.
\end{rem}

There are several methods to determine the modular forms $M_{k,h}(q)$ and compute the radial limits, depending on the behavior of $g_3$ at the chosen root of unity. The most straightforward case is when the sum defining $g_3$ converges absolutely in the radial limit. In this case, we can choose $M_{k,h}$ to be identically zero. If the sum does not converge, some summands can have poles arising from zeros in the denominator. Since there are no zeros in the numerator and the summands have increasing pole order, there is no way for the poles to cancel out (cf.~Section 3 in \cite{br}). If neither of these cases occur, further analysis is required.\\

First, we consider the case where a pole occurs in the summands of $g_3$. Here we need a Mortenson-type bilateral series identity for $g_3$. 
Let $a,b,A,B,k \in \N$ with $b,k>0$ and $(a,b)=1$. For $g_3(\zeta_b ^a \zeta_k ^{Ah} e^{-At},\zeta_k ^{Bh} e^{-Bt})$ to be well-defined for every $t>0$, we want to assume that $B\nmid A$ whenever $b=1$. Now we set $k':=\frac{k}{(k,B)}$, $B':=\frac{B}{(k,B)}$, and
\begin{equation*}
\mathcal{Q}_{a,b,A,B}:=\left\lbrace \frac{h}{k}\in\Q : b | k \text{ and } (k,B)\Big| \left(\frac{ak}{b}+Ah \right)  \right\rbrace.
\end{equation*} 
As in \cite{br}, Section 3, this is the set for which some summands of $g_3$ have poles. All limits in this paper are to be understood as radial limits from within the unit disc. To be precise, we will denote by $\lim_{q\rightarrow \zeta}F(q)$ the limit $\lim_{t\rightarrow 0^+}F(\zeta e^{-t})$ for $\zeta$ a root of unity and $F$ a function on the unit disc. The Jacobi triple product $j(x,q):=(x,q/x,q;q)_\infty$ is defined in Section 3.

\begin{thm}\label{pole}
If $\frac{h}{k}\in \mathcal{Q}_{a,b,A,B}$, then
\begin{multline*}
\lim_{q\rightarrow \zeta_h ^k} \left( g_3 (\zeta_b ^a q^A, q^B)-\frac{(q^B, q^B)_\infty ^2 j(q^{\frac{B}{2}},q^B)^2}{\zeta_b ^{a} q^{A}j(\zeta_b ^{a}q^{A+\frac{B}{2}}, q^B)^2 j(\zeta_b ^{a}q^{A}, q^B)} \right)\\
= -\zeta_b ^{-a} \zeta_k ^{-hA} + \zeta_b ^{-2a} \zeta_k ^{-2hA}\sum_{n= 1}^{k'} (\zeta_b ^{-a} \zeta_k ^{h(B-A)};\zeta_k ^{hB})_{n-1} (\zeta_b ^a \zeta_k ^{hA}; \zeta_k ^{hB})_{n} \zeta_k ^{hBn}.
\end{multline*}
Here in the expression $q^{\frac{B}{2}}$ the choice of the possibly implied square root does not matter.
\end{thm}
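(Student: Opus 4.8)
The plan is to produce the subtracted theta quotient as one side of a Mortenson-type bilateral identity for $g_3$, and then to evaluate the resulting negative-index tail at the root of unity by a short functional-equation argument. Writing $x:=\zeta_b^a q^A$ and $Q:=q^B$ for brevity, the key input is the bilateral identity $\sum_{n\in\Z}\frac{Q^{n(n-1)}}{(x,Q/x;Q)_n}=M_{k,h}(q)$, where $M_{k,h}(q)$ is exactly the quotient $\frac{(Q,Q)_\infty^2 j(Q^{1/2},Q)^2}{x\,j(xQ^{1/2},Q)^2 j(x,Q)}$ appearing in the statement (this is the promised analogue of Mortenson's Corollary~5.2). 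Since $g_3(x,Q)=\sum_{n\ge 1}$ is the one-sided part, subtracting gives $g_3(x,Q)-M_{k,h}(q)=-\sum_{n\le 0}\frac{Q^{n(n-1)}}{(x,Q/x;Q)_n}$. First I would simplify the negative-index terms via $(x;Q)_{-m}(Q/x;Q)_{-m}=Q^{m^2}/(x,Q/x;Q)_m$, which collapses the $n=-m$ summand to $Q^m(x,Q/x;Q)_m$, so that $g_3(x,Q)-M_{k,h}(q)=-\sum_{m\ge 0}Q^m(x,Q/x;Q)_m$. Note this is pole-free, confirming that $M_{k,h}$ cancels the poles of $g_3$.

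Next I would take the radial limit. As $q\to\zeta_k^h$ one has $Q\to\omega:=\zeta_k^{hB}$, a primitive $k'$-th root of unity, and $x\to\xi:=\zeta_b^a\zeta_k^{hA}$. A direct congruence computation shows that $\tfrac hk\in\mathcal Q_{a,b,A,B}$ is equivalent to $\xi^{k'}=1$, i.e.\ $\xi\in\langle\omega\rangle$. In that case $(\xi;\omega)_{k'}=1-\xi^{k'}=0$, so for $m\ge k'$ the product $(x,Q/x;Q)_m$ carries a factor tending to $0$ (linearly in $t$ along $q=\zeta_k^h e^{-t}$); this dominates the tail and justifies interchanging limit and summation, leaving the finite sum $\lim_{q\to\zeta_k^h}\bigl(g_3-M_{k,h}\bigr)=-S_1$, where $S_1:=\sum_{m=0}^{k'-1}\omega^m(\xi,\omega/\xi;\omega)_m$.

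Finally I would close the gap between $-S_1$ and the asserted right-hand side by a finite root-of-unity identity. Set $b_m:=(\xi;\omega)_m(\omega/\xi;\omega)_m$ and $\Sigma(z):=\sum_{m\ge 0}b_m z^m$, which by the previous paragraph is a polynomial of degree $<k'$. The contiguous relation $b_m=b_{m-1}(1-\xi\omega^{m-1})(1-\omega^m/\xi)$ yields, after reindexing, the functional equation
\[
(1-z)\Sigma(z)+z(\xi+\omega/\xi)\Sigma(\omega z)-z\omega\Sigma(\omega^2 z)=1 ,
\]
which at $z=1$ (legitimate precisely because $\Sigma$ is a polynomial) reduces to $(\xi+\omega/\xi)\Sigma(\omega)-\omega\Sigma(\omega^2)=1$. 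A one-line reindexing of the claimed sum shows $\sum_{n=1}^{k'}(\omega/\xi;\omega)_{n-1}(\xi;\omega)_n\omega^n=\omega\Sigma(\omega)-\xi\omega\Sigma(\omega^2)$. Multiplying the functional-equation consequence by $\xi$ and eliminating $\Sigma(\omega^2)$ between the two relations gives $\xi^2\Sigma(\omega)+\sum_{n=1}^{k'}(\omega/\xi;\omega)_{n-1}(\xi;\omega)_n\omega^n=\xi$; since $S_1=\Sigma(\omega)$, this rearranges to $-S_1=-\xi^{-1}+\xi^{-2}\sum_{n=1}^{k'}(\omega/\xi;\omega)_{n-1}(\xi;\omega)_n\omega^n$, which is exactly the stated formula upon substituting $\omega=\zeta_k^{hB}$, $\xi=\zeta_b^a\zeta_k^{hA}$, $\omega/\xi=\zeta_b^{-a}\zeta_k^{h(B-A)}$, $\xi^{-1}=\zeta_b^{-a}\zeta_k^{-hA}$, and $\xi^{-2}=\zeta_b^{-2a}\zeta_k^{-2hA}$.

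The main obstacle I anticipate is establishing the Mortenson-type bilateral identity with the theta quotient in exactly the right shape, so that its zeros and poles match those of $g_3$ term by term; this is the genuinely new analytic input. After that the work is essentially the careful interchange of limit and summation and the elementary functional-equation manipulation — and it is worth stressing that the hypothesis $\xi^{k'}=1$ is indispensable there, since for general $\xi$ the series $\Sigma$ is not a polynomial and $z=1$ sits on the boundary of convergence, so the final identity genuinely fails.
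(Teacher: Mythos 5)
Your proposal fails at its very first step: the ``key input'' bilateral identity
\[
\sum_{n\in\Z}\frac{Q^{n(n-1)}}{(x,Q/x;Q)_n}\;=\;\frac{(Q;Q)_\infty^2\, j(Q^{1/2},Q)^2}{x\, j(xQ^{1/2},Q)^2\, j(x,Q)}
\]
is false as an identity of functions, and no argument built on it can stand. A concrete check: take $(a,b,A,B)=(1,2,0,1)$, i.e.\ $x=-1$ and $Q=q$, which is within the scope of Theorem \ref{pole}. As $q\to 0$, your left-hand side (which, by your own correct computation of the negative-index terms, equals $g_3(x,q)+\sum_{m\geq 0}(x,q/x;q)_m q^m$) tends to $\frac{1}{1-x}+1=\frac32$, while the theta quotient tends to $\frac{1}{x(1-x)}=-\frac12$; already the constant terms disagree. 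The structural reason is exactly the difference between $g_2$ and $g_3$ that this paper has to confront: for $g_2$, Mortenson's Corollary 5.2 does express the bilateral series as a pure theta quotient, but the correct analogue for $g_3$ is \eqref{p1}, whose right-hand side $-\frac{j(x,q)}{x(q)_\infty}m(x^{-2},q,x)$ contains the Appell--Lerch sum $m(x^{-2},q,x)$ and is therefore mock, not modular. Equivalently, by the paper's exact decomposition (proof of Lemma \ref{tailid}), $g_3(x,q)=-x^{-1}+x^{-2}\widetilde{g}_t(x,q)+\frac{j(x,q)}{x^3(q)_\infty}m(x^{-2},q,x)$, so your claimed identity would force the Appell--Lerch term to equal an elementary combination of everywhere-convergent $q$-series, which the same $q\to0$ check refutes. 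What is true is only the \emph{asymptotic} version of your identity as $q\to\zeta_k^h$ radially with $\frac hk\in\mathcal{Q}_{a,b,A,B}$, and proving that is the entire analytic content of the theorem: the paper applies the shift property \eqref{shift} with $z=x\sqrt{q}$, which produces precisely your theta quotient \emph{plus} a leftover Appell--Lerch term, and then Proposition \ref{lim0} (following Lemma 4.2 of \cite{br}) shows that this leftover term vanishes in the radial limit at the cusps of $\mathcal{Q}_{a,b,A,B}$, because there $j(x,q)\to 0$ while $j(x\sqrt{q},q)$ stays away from zero. Your proposal skips exactly this step by assuming it away.

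Two things in your write-up are nonetheless worth keeping. Your reduction of $\frac hk\in\mathcal{Q}_{a,b,A,B}$ to $\xi^{k'}=1$ is correct, and your closing polynomial argument is valid: the functional equation $(1-z)\Sigma(z)+z(\xi+\omega/\xi)\Sigma(\omega z)-z\omega\Sigma(\omega^2 z)=1$, specialized at $z=1$, really does prove the finite root-of-unity identity
\[
-\sum_{m=0}^{k'-1}\omega^m(\xi,\omega/\xi;\omega)_m \;=\; -\xi^{-1}+\xi^{-2}\sum_{n=1}^{k'}(\omega/\xi;\omega)_{n-1}(\xi;\omega)_n\,\omega^n ,
\]
so the answer you predict coincides with the paper's right-hand side (the paper never needs this identity, since its tail arises directly in the form $-x^{-1}+x^{-2}\widetilde{g}_t$ via \eqref{l1} and \eqref{l2}, and truncates at $n=k'$ because $(x)_n$ eventually acquires a vanishing factor). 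But this agreement only shows that your final formula is consistent with the truth; it does not repair the proof. Even with a corrected asymptotic identity in hand, your interchange of limit and summation would still require the quantitative control of Proposition \ref{lim0} along $q=\zeta_k^h e^{-t}$, rather than the term-by-term observation that $(\xi;\omega)_{k'}=0$.
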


\begin{rem}
In fact, more is true: The right hand side, if one replaces $\zeta_k ^h$ by $\zeta_k ^h e^{-t}$, is not just the limit, but an asymptotic expansion of the left hand side as $t\rightarrow 0^+$. The same holds for the other theorems below. 
\end{rem}

\begin{rem}\label{forrem}
Setting $A=0$ and $B=1$, we obtain the same radial limits as Folsom, Ono, and Rhoades in \cite{for}, Theorem 1.2. However, the modular form we subtracted on the left hand side is different (see also Remark \ref{zu}). 
\end{rem}

Next, we treat the case where $g_3$ can be computed directly as an absolutely convergent geometric series. For $x\in\R$, let $\{ x\}$ denote the unique number in $[0,1)$ with $x-\{x\}\in \Z$. 

\begin{thm}\label{conv}
If $\frac{h}{k}\notin \mathcal{Q}_{a,b,A,B}$ and $\{k' \left(\frac{a}{b}+\frac{Ah}{k}\right)\}\in (\tfrac{1}{6},\tfrac{5}{6})$, then
\begin{equation*}
\lim_{q\rightarrow \zeta_h ^k} g_3 (\zeta_b ^a q^A, q^B)=\frac{1}{1-\frac{1}{2-2\cos (2\pi k'\left(\frac{a}{b}+\frac{Ah}{k}\right) )}}\sum_{j=1}^{k'}\frac{\zeta_k ^{hBj(j-1)}}{(\zeta_b ^a \zeta_k ^{hA} , \zeta_b ^{-a} \zeta_k ^{h(B-A)};\zeta_k ^{hB})_{j}}.
\end{equation*}
\end{thm}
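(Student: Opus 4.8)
The plan is to substitute $q=\zeta_k^h e^{-t}$ directly into the defining series and extract the radial limit term by term; under the stated hypotheses the series, reorganized according to residues modulo $k'$, collapses to a finite sum of convergent geometric series. First I would rewrite the function as a $q^B$-series. By the definition of $g_3$ and the substitution $x=\zeta_b^a q^A$, $q\mapsto q^B$,
\[
g_3(\zeta_b^a q^A, q^B) = \sum_{n\geq 1}\frac{q^{Bn(n-1)}}{(\zeta_b^a q^A, \zeta_b^{-a}q^{B-A}; q^B)_n}.
\]
Setting $q=\zeta_k^h e^{-t}$ and letting $t\to 0^+$, one has $q^B\to\omega:=\zeta_k^{hB}$, which is a primitive $k'$-th root of unity since $(h,k)=1$ and $k'=k/(k,B)$. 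Write $\alpha:=\zeta_b^a\zeta_k^{hA}$, $\beta:=\zeta_b^{-a}\zeta_k^{h(B-A)}$, and $\theta:=2\pi k'\left(\tfrac ab+\tfrac{Ah}{k}\right)$, so that $\zeta_b^a q^A\to\alpha$ and $\zeta_b^{-a}q^{B-A}\to\beta$. Because $\tfrac hk\notin\mathcal{Q}_{a,b,A,B}$, none of the factors $1-\alpha\omega^m$, $1-\beta\omega^m$ vanishes, so each individual summand converges as $t\to 0^+$.

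The heart of the computation is the periodicity modulo $k'$. I would write $n=k'\ell+j$ with $\ell\geq 0$ and $1\leq j\leq k'$, and factor each Pochhammer symbol via $(u;q)_{m+n}=(u;q)_m(uq^m;q)_n$ into $\ell$ full blocks of length $k'$ and one partial block of length $j$. Using $\alpha^{k'}=e^{i\theta}$ and $\beta^{k'}=e^{-i\theta}$ (the latter because $k'hB/k=hB'\in\Z$), a direct evaluation of the full-period products gives
\[
\prod_{m=0}^{k'-1}(1-\alpha\omega^m)=1-\alpha^{k'}=1-e^{i\theta},
\qquad
\prod_{m=0}^{k'-1}(1-\beta\omega^m)=1-\beta^{k'}=1-e^{-i\theta}.
\]
Hence, in the limit $t\to 0^+$, each full period contributes $(1-e^{i\theta})(1-e^{-i\theta})=2-2\cos\theta$ to the product of the two denominator symbols, so $(\zeta_b^a q^A,\zeta_b^{-a}q^{B-A};q^B)_{k'\ell+j}\to(2-2\cos\theta)^\ell\,(\alpha,\beta;\omega)_j$. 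Since $(k'\ell+j)(k'\ell+j-1)\equiv j(j-1)\pmod{k'}$, the numerator tends to $\omega^{j(j-1)}=\zeta_k^{hBj(j-1)}$, and the term-by-term limit of the $(\ell,j)$-summand is
\[
\frac{\zeta_k^{hBj(j-1)}}{(2-2\cos\theta)^{\ell}\,(\zeta_b^a\zeta_k^{hA},\zeta_b^{-a}\zeta_k^{h(B-A)};\zeta_k^{hB})_j}.
\]

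Summing the resulting geometric series in $\ell$ produces the factor
\[
\sum_{\ell\geq 0}(2-2\cos\theta)^{-\ell}=\frac{1}{1-(2-2\cos\theta)^{-1}},
\]
which converges precisely when $2-2\cos\theta>1$, i.e. $\cos\theta<\tfrac12$, i.e. $\{k'\left(\tfrac ab+\tfrac{Ah}{k}\right)\}\in(\tfrac16,\tfrac56)$ --- exactly the hypothesis. Summing over $1\leq j\leq k'$ then yields the claimed formula. The main obstacle is to justify interchanging $\lim_{t\to 0^+}$ with the infinite sum over $\ell$: for fixed $t>0$ the denominators do \emph{not} keep growing geometrically (the later blocks tend to $1$), and convergence of the series at fixed $t$ is instead guaranteed by the super-exponential (Gaussian) decay $|q^{Bn(n-1)}|=e^{-Btn(n-1)}$. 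I would establish a summable majorant valid uniformly for $t\in(0,t_0]$ by weighing these two effects against each other: for $\ell\lesssim (Bk't)^{-1}$ the active blocks stay close to their limit $1-e^{i\theta}$, whose modulus exceeds $1$ exactly under our condition, giving a bound $\ll(2-2\cos\theta)^{-\ell}$, while for larger $\ell$ one has $n\gtrsim(Bt)^{-1}$ and the Gaussian factor takes over. Making this majorant rigorous --- especially in the borderline regime where $2-2\cos\theta$ is close to $1$ and the limiting geometric series barely converges --- is the delicate point, analogous to the case-by-case estimates of Bringmann and Rolen in \cite{br}; once the dominated-convergence interchange is secured, the proof is complete.
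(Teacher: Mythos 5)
Your computation is exactly the one in the paper's proof: the regrouping $n=mk'+j$, the evaluation of the full-period product via $\prod_{j=0}^{k'-1}(1-xq^j)=1-x^{k'}$, which gives $(x,q/x;q)_{k'}=2-2\cos\theta$ (your verification that $\beta^{k'}=e^{-i\theta}$ because $hBk'/k=hB'\in\Z$ is the same point the paper makes implicitly), the congruence $(mk'+j)(mk'+j-1)\equiv j(j-1)\pmod{k'}$ for the numerator, and the geometric series in the number of full blocks with ratio $(2-2\cos\theta)^{-1}$, convergent precisely when $\{k'(\tfrac{a}{b}+\tfrac{Ah}{k})\}\in(\tfrac16,\tfrac56)$. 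So the algebraic core matches the paper line for line.

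Where you part ways with the paper is the bridging step between this boundary computation and the radial limit, and this is also where your proposal does not close. The paper carries out the rearrangement directly on the series $g_3(\zeta_b^a\zeta_k^{hA},\zeta_k^{hB})$ evaluated \emph{at} the root of unity, concludes that this series converges absolutely, and then identifies the radial limit with its value in a single sentence, ``by Abel's Theorem.'' You instead work at $q=\zeta_k^h e^{-t}$, take termwise limits, and propose to justify the interchange by dominated convergence with a majorant uniform in $t\in(0,t_0]$ --- but you explicitly leave that majorant unconstructed, calling it the delicate point. As written, the entire analytic content of the theorem is concentrated in that unproven step, so your text is a plan rather than a proof. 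Your instinct that the interchange is subtle is not unfounded: for fixed $t>0$ the later blocks drift to $1$, so the geometric decay is not uniform along the radius, and any uniform estimate becomes tight as $\cos\theta\uparrow\tfrac12$; indeed, the classical power-series form of Abel's theorem does not literally apply to a regrouped $q$-hypergeometric series, so an argument in the spirit of what you sketch (or a uniform-convergence statement on the closed radial segment) is what underlies the paper's citation. But if your goal is a complete proof, the economical route is the paper's: establish absolute convergence of the series of boundary values --- which your computation already yields --- and then invoke Abel's theorem to equate the radial limit with the boundary sum, rather than fighting for a uniform-in-$t$ majorant from scratch.
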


Now we need the following beautiful identity of Kang (\cite{kang}, Theorem 1.3.).
\begin{equation}\label{kangsid}
g_3 (x,q) + g_3 (\zeta _3 x , q) + g_3 (\zeta_3 ^2 x, q)= 
\frac{3i(q^3;q^3)_\infty ^3}{(q)_\infty j(x^3, q^3)}.
\end{equation}

If the two shifted values on the left hand side of \eqref{kangsid} converge, we can subtract the modular form on right hand side to obtain a bounded radial limit. This is only possible if $k'$ is not divisible by $3$.

\begin{thm}\label{kang}
Assume that $\frac{h}{k}\notin \mathcal{Q}_{a,b,A,B}$, $\{k' \left(\frac{a}{b}+\frac{Ah}{k}\right)\}\in [0,\tfrac{1}{6})\cup (\tfrac{5}{6},1)$ and $3\nmid k'$. Then we have
\begin{multline*}
\lim_{q\rightarrow \zeta_h ^k}\left( g_3 (\zeta_b ^a q^A, q^B) - \frac{3i(q^{3B}; q^{3B})_\infty ^3}{(q^B; q^B)_\infty j(\zeta_b ^{3a} q^{3A}, q^{3B})} \right)\\
=-\sum_{\ell =1}^2 \frac{1}{1-\frac{1}{2-2\cos (2\pi k'\left(\frac{a}{b}+\frac{Ah}{k}+\frac{\ell}{3}\right) )}}\sum_{j=1}^{k'}\frac{\zeta_k ^{hBj(j-1)}}{(\zeta_3 ^{\ell} \zeta_b ^a \zeta_k ^{hA} , \zeta_3 ^{2\ell} \zeta_b ^{-a} \zeta_k ^{h(B-A)};\zeta_k ^{hB})_{j}}.
\end{multline*}
\end{thm}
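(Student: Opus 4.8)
The plan is to leverage Kang's identity \eqref{kangsid} together with the convergence result of Theorem \ref{conv}. Writing $x=\zeta_b^a q^A$, $q\mapsto q^B$, the identity \eqref{kangsid} reads
\[
g_3(\zeta_b^a q^A, q^B) = \frac{3i(q^{3B};q^{3B})_\infty^3}{(q^B;q^B)_\infty\, j(\zeta_b^{3a}q^{3A}, q^{3B})} - g_3(\zeta_3\,\zeta_b^a q^A, q^B) - g_3(\zeta_3^2\,\zeta_b^a q^A, q^B).
\]
Here I recognize the subtracted modular form in the theorem as exactly the right hand side of Kang's identity with $x^3=\zeta_b^{3a}q^{3A}$. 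Thus after moving that modular form to the left, the limit in question equals $-\lim_{q\to\zeta_h^k}\left(g_3(\zeta_3\,\zeta_b^a q^A,q^B)+g_3(\zeta_3^2\,\zeta_b^a q^A,q^B)\right)$. The whole point is therefore to show that each of these two shifted $g_3$-values converges in the radial limit and to evaluate it via Theorem \ref{conv}.

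The second step is to verify that Theorem \ref{conv} applies to each shifted value $g_3(\zeta_3^{\ell}\zeta_b^a q^A,q^B)$ for $\ell=1,2$. The shift $\zeta_3^{\ell}=e^{2\pi i\ell/3}$ replaces the parameter $\frac{a}{b}$ effectively by $\frac{a}{b}+\frac{\ell}{3}$, so the relevant quantity governing convergence becomes $\{k'(\frac{a}{b}+\frac{Ah}{k}+\frac{\ell}{3})\}$. I must check that this lies in the open interval $(\tfrac16,\tfrac56)$ for both $\ell=1$ and $\ell=2$, given the hypotheses $\{k'(\frac{a}{b}+\frac{Ah}{k})\}\in[0,\tfrac16)\cup(\tfrac56,1)$ and $3\nmid k'$. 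Since $3\nmid k'$, the shifts $\frac{k'\ell}{3}\pmod 1$ are nonzero multiples of $\tfrac13$, which push a fractional part near $0$ or near $1$ safely into the central band away from the endpoints $\tfrac16,\tfrac56$; this is the case analysis that makes the hypothesis $3\nmid k'$ essential. I also need that the shifted values do not accidentally land in the pole set $\mathcal{Q}$, which again follows because the shift by $\zeta_3^\ell$ alters the divisibility condition defining $\mathcal{Q}$; I would record this carefully so that each shifted value falls under the convergent case rather than the pole case.

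Having established convergence, I apply Theorem \ref{conv} to each shifted value, obtaining for $\ell=1,2$ the closed form with $\zeta_b^a$ replaced by $\zeta_3^\ell\zeta_b^a$ throughout. In the prefactor the cosine argument becomes $2\pi k'(\frac{a}{b}+\frac{Ah}{k}+\frac{\ell}{3})$, matching the stated formula, and in the Pochhammer symbols the arguments become $\zeta_3^\ell\zeta_b^a\zeta_k^{hA}$ and $(\zeta_3^\ell\zeta_b^a)^{-1}\zeta_k^{h(B-A)}=\zeta_3^{2\ell}\zeta_b^{-a}\zeta_k^{h(B-A)}$, precisely the arguments appearing in the theorem. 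Summing over $\ell$ and inserting the overall minus sign produces the right hand side as written.

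The main obstacle I anticipate is the second step: ensuring the two shifted parameters both land in the convergent regime and avoid the pole set $\mathcal{Q}$. The interval condition is delicate because the hypothesis only controls the unshifted fractional part near the boundary region $[0,\tfrac16)\cup(\tfrac56,1)$, and I must confirm that adding $\tfrac{k'\ell}{3}\bmod 1$ never returns a value into the forbidden boundary band for either $\ell$. This is where $3\nmid k'$ does the work, but it requires a clean modular argument distinguishing the residue of $k'$ modulo $3$; I would split into the cases $k'\equiv1$ and $k'\equiv2\pmod 3$ and check each shift explicitly. A subsidiary point is confirming that the modular form subtracted in the statement genuinely qualifies as a modular form in the sense of Remark \ref{modu}, and that its radial limit (via the quotient of eta-type products) is finite under the stated hypotheses, so that the rearrangement of \eqref{kangsid} is legitimate in the limit rather than merely formally.
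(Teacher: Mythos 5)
Your proposal is correct and is essentially the paper's own proof: rearrange Kang's identity \eqref{kangsid}, verify that for $\ell=1,2$ the shifted values $g_3(\zeta_3^\ell\zeta_b^a q^A,q^B)$ avoid the pole set and satisfy $\{k'(\tfrac{a}{b}+\tfrac{Ah}{k}+\tfrac{\ell}{3})\}\in(\tfrac{1}{6},\tfrac{5}{6})$ --- which is exactly where $3\nmid k'$ enters, via the same residue analysis you sketch --- and then apply Theorem \ref{conv} to each shifted value. The only detail the paper records that you omit is that when $\tfrac{a}{b}\in\{\tfrac{1}{3},\tfrac{2}{3}\}$ the hypotheses force $A\neq 0$ and $B\nmid A$, so that all three summands of \eqref{kangsid} are well-defined inside the unit disc; this is a minor but necessary check.
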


If $3|k'$ and $g_3 (x,q)$ diverges, then also the other two summands in the left hand side of \eqref{kangsid} diverge. 


\begin{rem}\label{case4}
In trying to treat the case $3|k'$ and $\{ k' \left(\tfrac{a}{b}+\tfrac{Ah}{k}\right)\} \notin \{\tfrac{1}{6},\tfrac{5}{6}\}$, we first found a naive formula for the radial limits, which does not seem to be true in general due to convergence issues. However, if we additionally require the assumptions of Theorem \ref{pole} to be true, then numerical computations surprisingly suggest the naive formula and the expression in Theorem \ref{pole} to be equal in this overlap case. This leads us to the following conjecture.

\begin{con}
Let $x,q$ be roots of unity with $(x, q/x ;q)_\infty =0$ such that $q$ is a primitive root of unity of order $3k$ and $x^{3k}$ is not a primitive sixth order root of unity. Then we have
\begin{multline*}
\frac{1}{1-x^{3k}+x^{6k}}\sum_{j=1}^k (-1)^j x^{3j-2} q^{-\frac{(3j+1)j}{2}} \left(q (1+x^{3k} q^k) + x(1+x^{3k}q^{2k}) \right)\\
=-x^{-1}+x^{-2}\sum_{j=1}^{3k} (q/x; q)_{j-1} (x;q)_j q^j.
\end{multline*}
\end{con}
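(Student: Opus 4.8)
The plan is to prove the conjectured identity by recognizing that both sides compute the same radial limit of $g_3$ but through two different routes, and then equating them. Concretely, suppose $q=\zeta_k^h e^{-t}$ (with $q$ tending to a primitive $3k$-th root of unity) and $x=\zeta_b^a\zeta_k^{Ah}e^{-At}$, so that the parameters fall into the overlap regime described in Remark \ref{case4}: the conditions $3\mid k'$ and $\{k'(\tfrac ab+\tfrac{Ah}{k})\}\notin\{\tfrac16,\tfrac56\}$ hold, while simultaneously $\tfrac hk\in\mathcal Q_{a,b,A,B}$ so that Theorem \ref{pole} applies. The right-hand side of the conjecture is, up to the normalizations $\zeta_b^{-a}\zeta_k^{-hA}$ etc.\ appearing in Theorem \ref{pole}, exactly the bilateral-series expression $-\zeta_b^{-a}\zeta_k^{-hA}+\zeta_b^{-2a}\zeta_k^{-2hA}\sum_{n=1}^{k'}(\cdots)$ written in terms of the root-of-unity variables $x=\zeta_b^a\zeta_k^{hA}$ and $q=\zeta_k^{hB}$. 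So the first step is a purely formal rewriting: substitute $x^{3k}=e^{6\pi i(\ldots)}$, recognize $2-2\cos(2\pi\theta)=|1-e^{2\pi i\theta}|^2=(1-x^{3k})(1-x^{-3k})$-type factors, and check that the conjectured left-hand side is the ``naive formula'' that the authors obtained for the radial limit when $3\mid k'$.

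Second, I would verify that the naive left-hand side genuinely equals the radial limit of $g_3(x,q)$ in this overlap case. Here the idea is the one sketched for Theorem \ref{kang}: apply Kang's identity \eqref{kangsid}, which expresses $g_3(x,q)+g_3(\zeta_3 x,q)+g_3(\zeta_3^2 x,q)$ as the modular quotient $3i(q^3;q^3)_\infty^3/((q)_\infty j(x^3,q^3))$. When $3\mid k'$ all three summands $g_3(\zeta_3^\ell x,q)$ diverge in the same manner, and the naive formula arises by formally summing the three geometric-series limits (as in the proof of Theorem \ref{conv}) against the weights $\frac{1}{1-\frac{1}{2-2\cos(\cdots)}}$ for $\ell=0,1,2$. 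The factor $\frac{1}{1-x^{3k}+x^{6k}}=\frac{1}{\Phi_6(x^{3k})}$ on the left is precisely what one gets by combining the three cyclotomic weights into a single closed form, since $\prod_{\ell=0}^{2}(1-\zeta_3^\ell y)=1-y^3$ and the sixth-cyclotomic combination collapses the $\ell=1,2$ contributions; the inner sum $\sum_{j=1}^k(-1)^j x^{3j-2}q^{-(3j+1)j/2}(q(1+x^{3k}q^k)+x(1+x^{3k}q^{2k}))$ should emerge from expanding the truncated $q$-Pochhammer numerators over one full period of length $k$ and collecting terms.

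Third, with both sides identified as the same radial limit, the identity follows by transitivity: the bilateral expression of Theorem \ref{pole} equals $\lim_{q\to\zeta_h^k}(g_3-M)$ for the pole-type modular form $M$, the naive formula equals the same limit computed via Kang's identity, and in the overlap the subtracted modular forms differ by a quantity that tends to zero (this is the content of the numerical observation in Remark \ref{case4}). I would therefore need to check that the two modular-form corrections—the Jacobi-triple-product quotient from Theorem \ref{pole} and the implicit correction behind the naive formula—have the same radial limit at $\zeta_h^k$, so that subtracting them is harmless; establishing this matching of the modular pieces is the main obstacle. The delicate point is the convergence issue flagged in Remark \ref{case4}: the naive formula is only conditionally valid, so one must argue that imposing the extra hypothesis $\tfrac hk\in\mathcal Q_{a,b,A,B}$ (equivalently the $x,q$ root-of-unity constraints with $(x,q/x;q)_\infty=0$ in the conjecture) regularizes the divergent geometric sums—perhaps by an Abel-summation or a careful $t\to0^+$ asymptotic analysis of the three diverging $g_3(\zeta_3^\ell x,q)$ terms that isolates the finite part. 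I expect that turning the heuristic ``sum of three geometric limits'' into a rigorous asymptotic expansion, and showing the spurious divergent contributions cancel exactly under the $(x,q/x;q)_\infty=0$ condition, is where the real work lies; the algebraic manipulation of the cyclotomic weights and the Pochhammer sums is comparatively routine once that analytic input is secured.
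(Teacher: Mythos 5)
There is a genuine gap, and it is exactly the gap the paper itself could not close: this statement is stated as a \emph{conjecture} in Remark \ref{case4}, with the authors explicitly noting that their ``naive formula'' for the radial limit in the case $3\mid k'$ ``does not seem to be true in general due to convergence issues,'' that the agreement with the expression from Theorem \ref{pole} in the overlap case is only suggested by numerical computation, and that a proof is left as a challenge to the reader. Your proposal reconstructs precisely the heuristic that led the authors to formulate the conjecture, but it does not supply the missing analytic input. The critical step is your second one: you assert that the naive left-hand side ``genuinely equals the radial limit of $g_3(x,q)$'' via Kang's identity \eqref{kangsid}. But when $3\mid k'$, all three summands $g_3(\zeta_3^\ell x, q)$ diverge (the paper notes this immediately after Theorem \ref{kang}), so the geometric-series resummation from the proof of Theorem \ref{conv} is not available for any of them; the weights $\bigl(1-\tfrac{1}{2-2\cos(\cdots)}\bigr)^{-1}$ are purely formal, and Abel's theorem does not apply because the underlying series do not converge at the boundary point. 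You acknowledge this (``the real work lies'' there), but acknowledging the obstacle is not the same as overcoming it.

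Your third step is moreover circular: you invoke ``the numerical observation in Remark \ref{case4}'' to argue that the two modular-form corrections have matching radial limits, so that the two expressions for the limit can be equated by transitivity. That numerical observation \emph{is} the conjecture — the claim that the naive formula and the bilateral expression of Theorem \ref{pole} agree in the overlap case. Using it as an ingredient assumes the conclusion. A genuine proof would need either (i) a rigorous asymptotic analysis as $t\to 0^+$ of the three divergent terms in \eqref{kangsid} showing that the divergent parts cancel against the modular quotient and identifying the finite part with the left-hand side under the hypothesis $(x,q/x;q)_\infty=0$, or (ii) a direct algebraic/combinatorial proof of the finite root-of-unity identity itself (both sides are finite sums over roots of unity, so in principle this is an elementary, if intricate, cyclotomic identity). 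Neither is carried out in your proposal, so the statement remains exactly as open as the paper leaves it.
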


We challenge the interested reader to find a proof for this identity.
\end{rem}

\begin{exmp}
Up to a prefactor, all of Ramanujan's fifth order mock theta functions studied in \cite{bklmr} can be expressed as the sum of a specialization of $g_3$ with $\tfrac{a}{b}\in\tfrac{1}{5}\Z$ and $B=5$ or $10$, and a modular form. In these cases, we have $k' \left(\frac{a}{b}+\frac{Ah}{k}\right)=\frac{Ah}{(k,B)}\in \frac{1}{10}\Z$, so at least one of the above Theorems will apply. However, our expressions for the radial limits are different from those in \cite{bklmr}. Comparing the results and using the uniqueness of the radial limits yields many curious corollaries. We give one example of such an identity. The fifth order mock theta function $f_0 (q)$ can be written as $-2q^2 g_3 (q^2 ,q^{10})$ plus a modular form. Here we have $(a,b,A,B)=(0,1,2,10)$ with $\mathcal{Q}_{0,1,2,10}:=\left\lbrace \frac{h}{k}\in\Q : (k,10)|2h   \right\rbrace$. Let now $k$ be a positive even integer with $5\nmid k$, so that $(k,10)=2|2h$ for every $h$. Then we have $\tfrac{h}{k}\in\mathcal{Q}_{0,1,2,10}$ and we can apply Theorem \ref{pole}. On the other hand, for even $k$, we can also apply the first equation of \cite{bklmr}, Theorem 1.1.~(a) to compute the radial limit. Therefore we obtain the following corollary.

\begin{cor}
Let $k\in \N$ with $(k,10)=2$ and $\zeta$ a root of unity of order $k$. Then we have
\begin{equation*}
2-2\zeta^{-2}\sum_{n=1}^{k/2}(\zeta ^8 ; \zeta ^{10})_{n-1} (\zeta^2; \zeta ^{10})_n \zeta^{10n}= -2\sum_{n=0}^{k-1} \zeta ^{(n+1)(n+2)/2}(-\zeta ; \zeta)_n.
\end{equation*}.
\end{cor}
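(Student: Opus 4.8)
The plan is to evaluate the radial limit of $f_0$ at a root of unity $\zeta=\zeta_k^h$ of order $k$ in two genuinely different ways and then to equate the results using the uniqueness of radial limits proved by Choi, Lim, and Rhoades \cite{clr}. For the first evaluation I would start from the relation $f_0(q)=-2q^2 g_3(q^2,q^{10})+M(q)$, where $M$ is a modular form, and apply Theorem \ref{pole} with $(a,b,A,B)=(0,1,2,10)$. Since $(k,10)=2$ we have $k'=k/2$, and inserting $\zeta_b^a=1$, $\zeta_k^{hA}=\zeta^2$, $\zeta_k^{hB}=\zeta^{10}$ and $\zeta_k^{h(B-A)}=\zeta^8$ into the right-hand side of Theorem \ref{pole} yields
\[
\lim_{q\to\zeta}\bigl(g_3(q^2,q^{10})-M_1(q)\bigr)=-\zeta^{-2}+\zeta^{-4}\sum_{n=1}^{k/2}(\zeta^8;\zeta^{10})_{n-1}(\zeta^2;\zeta^{10})_n\,\zeta^{10n},
\]
where $M_1(q)$ denotes the explicit modular form subtracted on the left-hand side of Theorem \ref{pole}.

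Next I would transport the prefactor $-2q^2$ through this limit. Writing $f_0-(M-2q^2M_1)=-2q^2\bigl(g_3(q^2,q^{10})-M_1\bigr)$ and observing that $M-2q^2M_1$ is again a weakly holomorphic modular form in the sense of Remark \ref{modu}, we obtain a legitimate decomposition of $f_0$ near $\zeta$ whose bounded part has radial limit $-2\zeta^2$ times the right-hand side of the previous display. A short computation gives
\[
-2\zeta^2\Bigl(-\zeta^{-2}+\zeta^{-4}\sum_{n=1}^{k/2}(\zeta^8;\zeta^{10})_{n-1}(\zeta^2;\zeta^{10})_n\,\zeta^{10n}\Bigr)=2-2\zeta^{-2}\sum_{n=1}^{k/2}(\zeta^8;\zeta^{10})_{n-1}(\zeta^2;\zeta^{10})_n\,\zeta^{10n},
\]
which is precisely the left-hand side of the asserted identity.

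For the second evaluation I would apply the first equation of Theorem 1.1.(a) of \cite{bklmr}, which is valid since $k$ is even and computes the radial limit of $f_0$ at $\zeta$ --- after subtracting the modular contribution arising from the bilateral-series method --- as $-2\sum_{n=0}^{k-1}\zeta^{(n+1)(n+2)/2}(-\zeta;\zeta)_n$, the right-hand side of the corollary. Both evaluations produce the uniquely determined value $Q_{0,1,2,10,h,k}$ attached to $f_0$ at $\zeta$, so by the uniqueness theorem of \cite{clr} they must agree, and this agreement is exactly the claimed identity.

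The main obstacle is the bookkeeping of modular forms: the two methods subtract different weakly holomorphic forms, so in order to invoke uniqueness I must check that $M-2q^2M_1$ and the form implicit in \cite{bklmr} each satisfy condition (ii) of Definition \ref{ramdef} at $\zeta$, and in particular that multiplication by the admissible factor $-2q^2$ does not destroy modularity. Once this is confirmed, matching the two scalar limits is forced; verifying the substitutions in Theorem \ref{pole} and checking that the normalization in \cite{bklmr} reproduces the stated powers of $\zeta$ is then routine.
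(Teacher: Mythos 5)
Your proposal is correct and is essentially the paper's own argument: the paper likewise writes $f_0(q)=-2q^2g_3(q^2,q^{10})$ plus a modular form, notes that $(k,10)=2$ forces $\tfrac{h}{k}\in\mathcal{Q}_{0,1,2,10}$ so that Theorem \ref{pole} applies with $k'=k/2$, compares with the first equation of Theorem 1.1(a) of \cite{bklmr}, and concludes via the uniqueness of radial limits from \cite{clr}. Your extra care about the modular-form bookkeeping (that $M-2q^2M_1$ is still admissible in the sense of Remark \ref{modu}) is a detail the paper leaves implicit, but it is handled by exactly the conventions you cite.
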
 
We obtain similar equations for the other cases of \cite{bklmr}, Theorems 1.1.~and 1.2. 
It would be interesting to find a direct explanation for all of these identities. 
\end{exmp}

\section{Definitions and Useful Formulas}

First we fix some notation. For a non-zero complex number $q$ in the open unit disc, $n\in \N \cup \{\infty \}$ and $a\in \C$, the $q$-Pochhammer symbol is given by
\begin{equation*}
(a)_n := (a ; q)_n :=\prod_{j=0}^{n-1} (1-aq^{j})
\end{equation*}
and we abbreviate
\begin{equation*}
(a_1 ,\dots ,a_m ;q )_n:= \prod_{j=1}^{m}(a_j ;q)_n .
\end{equation*}
We further define the Jacobi triple product as
\begin{equation*}
j(x,q) := (x,q/x,q ; q)_\infty.
\end{equation*}
Now we show that the infinite products we subtracted in Theorems \ref{pole} and \ref{kang} are indeed modular forms.

\begin{lem}
For every $n\in\N$, $\zeta$ a root of unity, and $\alpha\in\Q$, the functions $(q^n)_\infty$ and $j(\zeta q^\alpha , q^n)$ are modular forms in the sense of Remark \ref{modu}. In particular, the infinite products on the left hand-sides of Theorems \ref{pole} and \ref{kang} are modular forms in this sense.
\end{lem}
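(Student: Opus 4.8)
The goal is to show that two kinds of infinite products — namely $(q^n)_\infty = (q^n;q^n)_\infty$ and the Jacobi triple product $j(\zeta q^\alpha,q^n)$ — are, up to a rational power of $q$, weakly holomorphic modular forms of weight $1/2$ on some congruence subgroup with some multiplier. The natural strategy is to reduce everything to the Dedekind eta function $\eta(\tau)=q^{1/24}\prod_{m\ge 1}(1-q^m)$ with $q=e^{2\pi i\tau}$, which is the prototypical weight-$1/2$ modular form (on $\mathrm{SL}_2(\mathbb Z)$ with the eta-multiplier). First I would dispose of $(q^n)_\infty$: writing $(q^n;q^n)_\infty = q^{-n/24}\,\eta(n\tau)$ shows it is, up to the rational $q$-power $q^{-n/24}$, the form $\eta(n\tau)$, which is a weight-$1/2$ modular form on $\Gamma_0(n)$ (with the induced multiplier). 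By Remark~\ref{modu}, where the rational $q$-power is allowed, this already settles the first family.

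Next I would treat $j(\zeta q^\alpha,q^n)$ by expressing it through eta functions as well. The key tool is the classical product expansion of a theta function: for a root of unity $\zeta = e^{2\pi i r}$ with $r\in\mathbb Q$ and rational $\alpha$, the product $j(\zeta q^\alpha,q^n)=(\zeta q^\alpha,\,q^{n}\zeta^{-1}q^{-\alpha},\,q^n;q^n)_\infty$ can be rewritten, after the substitution $q=e^{2\pi i\tau}$, as a classical Jacobi theta value $\vartheta(z;\,n\tau)$ evaluated at a point $z = (\alpha+r)\tau'$ lying in $\mathbb Q\tau+\mathbb Q$. I would then invoke the standard fact (via the Jacobi triple product formula for $\vartheta$) that such a theta value at a rational point of the lattice is, up to an elementary factor of the shape $q^{\beta}\zeta'$ (a rational $q$-power times a root of unity), a \emph{generalized eta product} — equivalently, a finite product of shifted eta functions $\eta$ at rational arguments. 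Generalized eta functions are known to be weight-$1/2$ weakly holomorphic modular forms on a suitable $\Gamma_1(N)$ (or $\Gamma(N)$) with an explicit multiplier; assembling the finitely many factors and tracking the overall rational $q$-power completes the identification.

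Concretely, the cleanest route is probably to write $j(\zeta q^\alpha,q^n)$ directly as a theta quotient / eta quotient. One has the product-to-eta dictionary
\[
(q^{a};q^{N})_\infty \;=\; \frac{\text{(rational $q$-power)}\cdot\eta\!\left(\tfrac{a}{1}\tau\right)\text{-type factors}}{\cdots},
\]
and each of the three Pochhammer factors comprising $j$ is a single such ``residue-class'' product. I would express $j(\zeta q^\alpha,q^n)$ as the product of these three generalized eta components, absorb the resulting $\zeta$-root-of-unity and $q^\alpha$ prefactors into the allowed rational $q$-power and multiplier, and conclude that the total has weight $\tfrac12+\tfrac12-0=\tfrac12$ — the two nontrivial infinite-product factors $(\zeta q^\alpha;q^n)_\infty$ and $(q^n\zeta^{-1}q^{-\alpha};q^n)_\infty$ together contribute weight $0$ as a theta function, and the factor $(q^n;q^n)_\infty$ contributes the weight $\tfrac12$; I would verify the bookkeeping so the stated weight $1/2$ comes out. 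Finally, the last sentence of the lemma follows immediately: the products subtracted in Theorems~\ref{pole} and \ref{kang} are quotients and products of finitely many functions of the two types just handled, and since weight-$\tfrac12$ modular forms in the sense of Remark~\ref{modu} are closed under such multiplicative combinations once the weights are tallied (each subtracted expression is arranged to have total weight $\tfrac12$), they are modular forms as claimed.

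**Main obstacle.**
The genuinely delicate part is \emph{not} the eta-function input but the careful tracking of the multiplier system and the rational $q$-power when $\alpha$ and the argument of $\zeta$ are arbitrary rationals: one must check that the substitution $q^\alpha=e^{2\pi i\alpha\tau}$ keeps the argument inside a single residue class modulo the level $N=n\cdot(\text{denominators})$, and that the collection of transformation factors (theta multiplier $\times$ eta multiplier) assembles into a bona fide multiplier for one common congruence subgroup. Pinning down this common level $N$ and verifying the multiplier is consistent — rather than merely citing ``theta functions are modular of weight $1/2$'' — is where the real care is required; everything else is the standard product-to-eta translation.
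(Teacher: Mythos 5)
Your proposal is correct, and its skeleton matches the paper's proof: dispose of $(q^n)_\infty$ via $(q)_\infty=q^{-1/24}\eta(q)$ (with $q\mapsto q^n$ only changing the congruence subgroup), and reduce $j(\zeta q^\alpha,q^n)$ to a specialized theta function via the Jacobi triple product. The difference lies in the key fact you invoke for the theta part. The paper writes $j(x,q)=-x^{1/2}q^{-1/8}\vartheta(x,q)$ and cites that $\vartheta$ is a \emph{Jacobi form} of weight and index $1/2$, so that specializing the elliptic variable at a torsion point $\zeta q^{\alpha}$ automatically produces a weight-$1/2$ modular form; the whole proof is then three lines. You instead unpack $j(\zeta q^\alpha,q^n)$ through the product-to-eta dictionary into generalized eta (Siegel-type) products and cite their modularity on $\Gamma_1(N)$. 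Both routes are valid; what yours buys is explicitness (an actual level $N$ and multiplier, should one want them), while the paper's Jacobi-form formalism buys brevity -- and, importantly, it makes the ``main obstacle'' you flag evaporate: since Remark~\ref{modu} only asks for modularity on \emph{some} congruence subgroup with \emph{some} multiplier, no tracking of the common level or consistency of the assembled multiplier is actually required, so the careful bookkeeping you describe is optional rather than essential. One small slip to fix in your write-up: the weight tally ``$\tfrac12+\tfrac12-0=\tfrac12$'' is garbled as written; the correct accounting is the one you state immediately afterwards, namely that the two elliptic factors $(\zeta q^\alpha;q^n)_\infty(q^{n-\alpha}\zeta^{-1};q^n)_\infty$ contribute weight $0$ and $(q^n;q^n)_\infty$ contributes weight $\tfrac12$.
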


\begin{proof}
First we have 
$$
(q)_\infty= q^{-1/24}\eta(q),
$$
where $\eta$ is the Dedekind eta-function, which is a modular form of weight $1/2$. Furthermore, the well-known Jacobi triple product identity states that 
$$
j(x,q)=-x^{1/2}q^{-1/8}\vartheta(x,q),
$$
where $\vartheta$ is the Jacobi theta-function, which is a Jacobi form of weight and index $1/2$. This implies that $\vartheta(\zeta q^\alpha, q)$ is a modular form of weight $1/2$ for every root of unity $\zeta$ and $\alpha\in\Q$. Since replacing $q$ by $q^n$ only changes the congruence subgroup, we obtain the statement. 
\end{proof}

We next define the Appell-Lerch sum
\begin{equation*}
m(x,q,z):=\frac{1}{j(z,q)}\sum_{n\in\Z}\frac{(-z)^n q^{\frac{n(n-1)}{2}}}{1-xzq^{n-1}}.
\end{equation*}
We have the following shifting property in the third argument (see \cite{zwe}, Propostion 1.4, (7), using a different notation, or \cite{mor}, Propostion 1.1, (1.4d)):
\begin{equation}\label{shift}
m(x,q,z) - m(x,q,w) = \frac{w (q)_\infty ^3 j(z/w,q)j(xzw, q)}{j(z,q)j(w,q)j(xz,q)j(xw,q)}.
\end{equation}
Moreover, we set
\begin{equation*}
\widetilde{g}(x,q) :=-x\sum_{n\geq 0} \frac{q^{n^2}}{(x)_{n+1} (q/x)_{n}}.
\end{equation*}

The tail sum for $\widetilde{g}$ is defined as
\begin{equation*}
\widetilde{g}_t (x,q) := -x\sum_{n < 0} \frac{q^{n^2}}{(x)_{n+1} (q/x)_{n}}.
\end{equation*}
We want to express $\widetilde{g}_t$ as a sum over positive integers. For this, we use the well-known identity
\begin{equation*}
(a)_{-n} = -\frac{q^{\frac{n(n+1)}{2}}}{a^n (q/a)_n}
\end{equation*} 
for $a\in\C$ and $n\in\N$, to write
\begin{equation}\label{l1}
\widetilde{g}_t (x,q) = \sum_{n\geq 1} (q/x)_{n-1} (x)_n q^n.
\end{equation}

\begin{rem}
Note that $R(x,q)=(1-x)(1+xg_3 (x,q))$ is Dyson's rank generating function and $U(x,q)=(1-x)^{-1}\widetilde{g}_t(x,q)$ is the generating function for strongly unimodal sequences. These are exactly the functions occurring in \cite{for}, Theorem 1.2. 
\end{rem}

Now we need the following relation between $g_3$ and $\widetilde{g}$, which can be found in \cite{bfr}, Theorem 3.1.
\begin{equation}\label{l2}
g_3 (x,q)=-x^{-1} \left( 1+x^{-1}\widetilde{g}(x,q)\right).
\end{equation}

We also require the following identity from the Lost Notebook, which has already been used in \cite{for} to prove their related Theorem 1.2.

\begin{prop}[see \cite{lost}, Entry 3.4.7 or \cite{mor}, Proposition 1.3]
For $a,b\in\C\backslash\{0\}$, we have
\begin{equation*}
\sum_{n=0}^\infty \frac{a^{-n-1}b^{-n}}{(-1/a)_{n+1}(-q/b)_n}q^{n^2}+\sum_{n=1}^\infty (-aq)_{n-1}(-b)_n q^n = \frac{(-aq)_\infty j(-b,q)}{b(q, -q/b; q)_\infty}m(a/b,q,-b).
\end{equation*}
\end{prop}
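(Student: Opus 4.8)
The plan is to unfold the Appell--Lerch sum on the right and reduce the statement to a theta-free bilateral identity. First I would put $x=a/b$ and $z=-b$ in the definition of $m$, giving
$$
m\!\left(\tfrac{a}{b},q,-b\right)=\frac{1}{j(-b,q)}\sum_{n\in\Z}\frac{b^{n}q^{\frac{n(n-1)}{2}}}{1+aq^{n-1}}.
$$
The factor $j(-b,q)$ cancels the one in the Proposition, and after the shift $n\mapsto n+1$ together with the evaluation $(q,-q/b;q)_\infty=(q;q)_\infty(-q/b;q)_\infty$, the claim reduces to the purely formal identity
$$
\sum_{n=0}^\infty \frac{a^{-n-1}b^{-n}q^{n^2}}{(-1/a)_{n+1}(-q/b)_n}+\sum_{n=1}^\infty (-aq)_{n-1}(-b)_n q^n=\frac{(-aq)_\infty}{(q;q)_\infty(-q/b;q)_\infty}\sum_{n\in\Z}\frac{b^{n}q^{\frac{n(n+1)}{2}}}{1+aq^{n}},
$$
which carries all the content.

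Next I would read the two sums on the left as a ``mock'' part and its ``tail''. Factoring $(-1/a)_{n+1}=(1+\tfrac1a)(-q/a)_n$ turns the first sum into the Hecke-type series $\tfrac{1}{a+1}\sum_{n\ge0}\frac{(ab)^{-n}q^{n^2}}{(-q/a)_n(-q/b)_n}$, while the algebra $(-b)_n=(1+b)(-bq)_{n-1}$ rewrites the second, tail sum (a two-variable analogue of $\widetilde g_t$ in \eqref{l1}) as $(1+b)q\sum_{n\ge0}(-aq)_n(-bq)_nq^n$. The core step is to convert the square-exponent Hecke-type series into a Lambert-type series whose summand is the $\frac{b^{n}q^{n(n+1)/2}}{1+aq^{n}}$ appearing on the right. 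I would do this with a Rogers--Fine type transformation: the factor $(1-\cdots q^{2n})$ it produces distributes into two pieces with denominators $\prod_j(1+aq^{j})$, one reproducing the $n\ge0$ half of the bilateral sum and the other, after the negative-index rule $(a)_{-n}=-q^{n(n+1)/2}/(a^{n}(q/a)_n)$, the $n<0$ half; the partial-theta tail $(1+b)q\sum(-aq)_n(-bq)_nq^n$ absorbs the sign discrepancy between the continued summand and the genuine negative-index terms. Matching the prefactor $\frac{(-aq)_\infty}{(q;q)_\infty(-q/b;q)_\infty}$ is then bookkeeping with the $q$-binomial theorem and Jacobi's triple product.

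The hard part is exactly this passage from the square-exponent form to the Lambert form. A naive geometric expansion of $\frac{1}{1+aq^{n}}$ on the ranges $n\ge0$ and $n<0$ only produces partial theta functions $\sum_m x^m q^{m(m+1)/2}$, which have no product closed form, so termwise resummation fails; the legitimate bridge is the Rogers--Fine identity (equivalently a Bailey pair, or the Hickerson--Mortenson conversion of Hecke-type sums into Appell--Lerch sums). The subtle point is that the two admissible expansions converge on complementary ranges of $n$, and this is precisely what forces the single bilateral Appell--Lerch sum to split as a mock piece plus a partial-theta tail rather than as one analytically continued series. Since the identity is classical (Ramanujan's Lost Notebook, \cite{lost}, Entry 3.4.7, reproved in \cite{mor}, Proposition 1.3), one may also simply cite it; the sketch above explains the provenance of the two summands.
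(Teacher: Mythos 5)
The first thing to note is that the paper does not prove this Proposition at all: it is imported verbatim from the literature, and the bracketed attribution (\cite{lost}, Entry 3.4.7; \cite{mor}, Proposition 1.3) is the entire justification. So the operative comparison is with your final sentence, where you fall back on exactly that citation --- in that respect your proposal takes the same route as the paper. Your preliminary reductions are correct and are a genuine addition: with $x=a/b$, $z=-b$ one indeed has $(-z)^n=b^n$ and $1-xzq^{n-1}=1+aq^{n-1}$, the factor $j(-b,q)$ cancels, the shift $n\mapsto n+1$ extracts a factor $b$ that kills the $b$ in the denominator, and the factorizations $(-1/a)_{n+1}=(1+\tfrac1a)(-q/a)_n$ and $(-b)_n=(1+b)(-bq)_{n-1}$ are right. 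This correctly isolates where the content of the identity lives.

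However, be clear that the middle of your proposal is a description of a proof, not a proof. The passage from $\tfrac{1}{a+1}\sum_{n\ge 0}\frac{(ab)^{-n}q^{n^2}}{(-q/a)_n(-q/b)_n}$ plus the partial-theta tail to the bilateral Lambert-type sum $\sum_{n\in\Z}\frac{b^n q^{n(n+1)/2}}{1+aq^n}$ \emph{is} Entry 3.4.7; saying that a Rogers--Fine factor ``distributes into two pieces'' and that the tail ``absorbs the sign discrepancy'' names the expected mechanism but performs none of it. No specialization of the Rogers--Fine parameters is exhibited, the prefactor $\frac{(-aq)_\infty}{(q;q)_\infty(-q/b;q)_\infty}$ is waved off as ``bookkeeping,'' and the matching of the analytically continued summand against the genuine negative-index terms via $(a)_{-n}=-q^{n(n+1)/2}/(a^n(q/a)_n)$ is precisely the delicate step where such arguments succeed or fail. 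If you delete the citation, the argument has a hole exactly where the theorem should be; if you keep the citation, the proposal is sound and in fact more informative than the paper, which offers no derivation at all.
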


Plugging in $a=-x^{-1}$ and $b=-x$, we obtain a nice bilateral series identity for $\widetilde{g}$:

\begin{equation}\label{p1}
\widetilde{g}(x,q)+\widetilde{g}_t (x,q)=-\frac{j(x,q)}{x(q)_\infty}m(x^{-2},q,x).
\end{equation}

\begin{rem}\label{zu}
Note that \eqref{p1} is equivalent to Equation (3) of \cite{zud}. Zudilin asked if one can relate the right hand side asymptotically to the Andrews-Garvan crank function $C(x,q)$, in order to obtain a simpler proof of Theorem 1.2.~of \cite{for}. However, in our approach, we end up with a modular form different from $C(x,q)$ in Theorem \ref{pole}. 
\end{rem}

We will also need the following functional equation for $g_3$ (cf.~\cite{gmi}, eqs.~(4.7) and (6.2)):
\begin{equation}\label{feq}
g_3 (xq, q) = -x^3 g_3 (x,q) - x^2 -x.
\end{equation}

\begin{rem}
One may also deduce \eqref{p1} by applying the heuristic presented in \cite{mor} to $\widetilde{g}$ and a functional equation corresponding to \eqref{feq}. 
\end{rem}

With the previous results we can express $g_3$ as a sum of $\widetilde{g}_t$, an Appell-Lerch sum and a Jacobi form. 

\begin{lem}\label{tailid}
For every $z\in \C$ with $j(z,q)\neq 0$, we have
\begin{multline*}
g_3 (x,q)=-x^{-1}+x^{-2}\sum_{n\geq 1}(q/x)_{n-1}(x)_n q^n +\frac{j(x,q)}{x^3 (q)_\infty j(z,q)}\sum_{n\in\Z}\frac{(-z)^n q^{\frac{n(n-1)}{2}}}{1-x^{-2}zq^{n-1}}\\
+\frac{z(q)_\infty j(xz^{-1},q)j(x^{-1}z,q)}{x^3 j(z,q)j(x^{-1},q)j(x^{-2}z,q)}.
\end{multline*}
\end{lem}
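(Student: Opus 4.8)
The plan is to assemble Lemma \ref{tailid} by combining the three identities \eqref{l2}, \eqref{p1}, and the shifting property \eqref{shift} of the Appell-Lerch sum. I would start from \eqref{l2}, which gives
\[
g_3(x,q) = -x^{-1} - x^{-2}\widetilde{g}(x,q).
\]
The immediate goal is to rewrite $\widetilde{g}(x,q)$ in a usable form. Rearranging \eqref{p1} gives
\[
\widetilde{g}(x,q) = -\widetilde{g}_t(x,q) - \frac{j(x,q)}{x(q)_\infty}\, m(x^{-2},q,x),
\]
and substituting the explicit tail-sum expression \eqref{l1} for $\widetilde{g}_t(x,q)$ already produces the first two displayed terms of the lemma (the $-x^{-1}$ from \eqref{l2} and the tail sum $x^{-2}\sum_{n\geq1}(q/x)_{n-1}(x)_n q^n$, with the overall sign working out correctly after multiplying by $-x^{-2}$).

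The remaining work is to handle the Appell-Lerch contribution $-x^{-2}\cdot\big(-\tfrac{j(x,q)}{x(q)_\infty}\big) m(x^{-2},q,x) = \tfrac{j(x,q)}{x^3(q)_\infty}\, m(x^{-2},q,x)$, and to replace the fixed third argument $x$ by an arbitrary $z$ with $j(z,q)\neq 0$. For this I would invoke the shifting formula \eqref{shift} with the substitution $(x,z,w)\mapsto (x^{-2},z,x)$, which reads
\[
m(x^{-2},q,z) - m(x^{-2},q,x) = \frac{x(q)_\infty^3\, j(z/x,q)\, j(x^{-2}zx,q)}{j(z,q)\,j(x,q)\,j(x^{-2}z,q)\,j(x^{-1},q)}.
\]
Solving for $m(x^{-2},q,x)$ and inserting the definition of $m$ in the first term then yields the Appell-Lerch sum over $n\in\Z$ (third displayed term) together with an explicit Jacobi-form correction.

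The main bookkeeping obstacle will be matching the correction term to the fourth displayed expression of the lemma, namely
\[
\frac{z(q)_\infty\, j(xz^{-1},q)\,j(x^{-1}z,q)}{x^3\, j(z,q)\,j(x^{-1},q)\,j(x^{-2}z,q)}.
\]
This requires carefully tracking the $(q)_\infty$ powers (the $(q)_\infty^3$ from \eqref{shift} combines with the $(q)_\infty^{-1}$ prefactor to leave $(q)_\infty^2$, but one $j(x,q)$ cancels against the prefactor's $j(x,q)$ and must be reconciled with the normalization $j(xz^{-1},q)$ versus $j(z/x,q)$) and using the elementary symmetry $j(w,q)=-w^{-1}j(w^{-1},q)$ of the Jacobi triple product to convert $j(z/x,q)=j(x^{-1}z,q)$ and $j(x^{-2}zx,q)=j(x^{-1}z,q)$, as well as to extract the prefactor $z/x$ relating $j(xz^{-1},q)$ and $j(x^{-1}z,q)$. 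I expect the sign and the precise power of $x$ to be the most error-prone part; everything else is a direct substitution and simplification once the three ingredients are stacked in the right order.
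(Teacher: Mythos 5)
Your proposal is correct and follows exactly the paper's own proof: combine \eqref{l2} and \eqref{p1}, substitute \eqref{l1} for the tail sum, and apply the shift property \eqref{shift} with the substitution $(x,z,w)\mapsto(x^{-2},z,x)$, then clean up the Jacobi-form correction via $j(w^{-1},q)=-w^{-1}j(w,q)$. Your bookkeeping of the $(q)_\infty$-powers is also right: the correction term carries $(q)_\infty^2$, not the single $(q)_\infty$ printed in the lemma statement, which is a typo in the paper, as confirmed by the specialization \eqref{tailid2} and the modular form subtracted in Theorem \ref{pole}, both of which display $(q)_\infty^2$.
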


\begin{proof}
Equations \eqref{l2} and \eqref{p1} imply
\begin{equation*}
g_3 (x,q) = -x^{-1}+x^{-2}\left( \widetilde{g}_t (x,q) + \frac{j(x,q)}{x(q)_\infty}m(x^{-2},q,x) \right).
\end{equation*}
Now the statement follows by \eqref{l1} and \eqref{shift}.
\end{proof}

If some summands of $g_3$ have a pole, then $\widetilde{g}_t$ becomes a finite sum. The idea is to choose an appropriate $z$, such that the Apell-Lerch sum goes to zero in this case. Now setting $z=x\sqrt{q}$ in Proposition \ref{tailid} yields
\begin{multline}\label{tailid2}
g_3 (x,q)=-x^{-1}+x^{-2}\sum_{n\geq 1}(q/x)_{n-1}(x)_n q^n 
+\frac{j(x,q)}{x^3 (q)_\infty j(x\sqrt{q},q)}\sum_{n\in\Z}\frac{(-x)^n q^{\frac{n^2}{2}}}{1-x^{-1} q^{n-\frac{1}{2}}}\\+\frac{(q)_\infty ^2 j(\sqrt{q},q)^2}{x j(x\sqrt{q},q)^2 j(x,q)}.
\end{multline}
Since we always have $j\left(\pm x\sqrt{q}, q\right)\neq 0$, the choice of the square root $\sqrt{q}$ does not matter.

\section{Case-by-Case Analysis}

\subsection{Poles in the denominators}
First we look at poles of $g_3$ arising from zeros in the denominators.
\begin{lem}
The poles in $g_3 (\zeta _b ^a q^A, q^B)$ arising from zeros in the denominators are exactly at the cusps $\frac{h}{k}\in \mathcal{Q}_{a,b,A,B}$.
\end{lem}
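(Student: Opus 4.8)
The plan is to substitute $x=\zeta_b^a q^A$ and replace $q$ by $q^B$ in the series defining $g_3$, read off exactly which denominator factors can vanish as $q\to\zeta_k^h$, and then translate the vanishing condition into the two divisibility conditions defining $\mathcal{Q}_{a,b,A,B}$. First I would write
\begin{equation*}
g_3(\zeta_b^a q^A, q^B) = \sum_{n\geq 1} \frac{q^{Bn(n-1)}}{\prod_{j=0}^{n-1}\left(1-\zeta_b^a q^{A+Bj}\right)\left(1-\zeta_b^{-a} q^{B-A+Bj}\right)},
\end{equation*}
so that the only possible poles at a root of unity arise from the vanishing of a factor $1-\zeta_b^a q^{A+Bj}$ or $1-\zeta_b^{-a} q^{B-A+Bj}$ for some $j\geq 0$. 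Since the numerators $q^{Bn(n-1)}$ are nonzero at every root of unity and the denominators accumulate factors as $n$ grows (so the pole order strictly increases), no cancellation between summands can occur (cf.~Section 3 of \cite{br}); thus $g_3(\zeta_b^a q^A, q^B)$ has such a pole at $\zeta_k^h$ precisely when one of these factors vanishes there for some $j\geq 0$.

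Next I would treat the first family of factors. Evaluating at $q=\zeta_k^h=e^{2\pi i h/k}$, the factor $1-\zeta_b^a q^{A+Bj}$ vanishes if and only if $\frac{a}{b}+\frac{h(A+Bj)}{k}\in\Z$. Clearing denominators gives $bk\mid ak+bh(A+Bj)$, whence $b\mid ak$ and therefore $b\mid k$ by $(a,b)=1$; this is the first condition defining $\mathcal{Q}_{a,b,A,B}$. Granting $b\mid k$, the requirement becomes the linear congruence
\begin{equation*}
hBj \equiv -\left(\tfrac{ak}{b}+hA\right) \pmod{k}
\end{equation*}
in the unknown $j$, which is solvable (and then has infinitely many nonnegative solutions) exactly when $\gcd(hB,k)\mid\left(\tfrac{ak}{b}+hA\right)$. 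Because $(h,k)=1$, we may replace $\gcd(hB,k)$ by $(k,B)$, so solvability is equivalent to $(k,B)\mid\left(\tfrac{ak}{b}+Ah\right)$, the second defining condition of $\mathcal{Q}_{a,b,A,B}$.

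Finally I would check the second family $1-\zeta_b^{-a} q^{B-A+Bj}$, which likewise forces $b\mid k$ and then leads to the congruence $hBj\equiv \tfrac{ak}{b}-h(B-A)\pmod{k}$. Since $(k,B)\mid hB$, this congruence is solvable under the very same divisibility $(k,B)\mid\left(\tfrac{ak}{b}+Ah\right)$, so the second family contributes no new cusps. Combining the two families, a denominator of some summand vanishes for some $j\geq 0$ if and only if $b\mid k$ and $(k,B)\mid\left(\tfrac{ak}{b}+Ah\right)$, i.e.~if and only if $\tfrac{h}{k}\in\mathcal{Q}_{a,b,A,B}$, which is the assertion. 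The one point requiring care is reconciling the two seemingly different congruences into a single divisibility condition; once the coprimality $(h,k)=1$ is used to reduce $\gcd(hB,k)$ to $(k,B)$, and one notes $(k,B)\mid hB$, the two collapse and the remainder is routine.
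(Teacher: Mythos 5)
Your proof is correct, but it takes a genuinely different route from the paper. The paper disposes of this lemma in two sentences: it cites Proposition 3.2 of \cite{br}, observing that the summands of the function $g_2$ studied there have exactly the same denominators $(x,q/x;q)_n$ as the summands of $g_3$, and that the numerators $q^{n(n-1)}$ of $g_3$ never vanish; the determination of the cusps where denominators vanish then carries over verbatim. You instead re-derive that determination from scratch: expanding the denominator factors, extracting $b\mid k$ from $(a,b)=1$, and reducing the first family of factors to the solvability in $j$ of $hBj\equiv-\left(\tfrac{ak}{b}+hA\right)\pmod{k}$, which by $(h,k)=1$ is equivalent to $(k,B)\mid\left(\tfrac{ak}{b}+Ah\right)$. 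Your observation that the second family $1-\zeta_b^{-a}q^{B-A+Bj}$ collapses to the very same divisibility condition because $(k,B)\mid hB$ is precisely the point that makes the two conditions defining $\mathcal{Q}_{a,b,A,B}$ both necessary and sufficient, and it is handled correctly. What your approach buys is self-containedness: a reader can verify the definition of $\mathcal{Q}_{a,b,A,B}$ without consulting \cite{br}, and in effect you have reproved the relevant content of their Proposition 3.2. What the paper's approach buys is brevity and transparency about why the $g_2$ and $g_3$ cases share the same pole locations. Note that both arguments rest on the same non-cancellation principle (nonvanishing numerators together with increasing pole order of the summands, cf.~Section 3 of \cite{br}), which you also state; strictly speaking this is needed to pass from ``some denominator vanishes'' to ``the function has a pole,'' and like the paper you defer its justification to \cite{br}.
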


\begin{proof}
The statement follows from \cite{br}, Proposition 3.2, since the summands of the function $g_2$ examined in \cite{br} have exactly the same denominators as the summands of $g_3$. Moreover, the numerators of the summands of $g_3$ never vanish.  
\end{proof}

\begin{prop}\label{lim0}
For every $\frac{h}{k}\in \mathcal{Q}_{a,b,A,B}$, we have
\begin{equation*}
\lim_{q\rightarrow \zeta_h ^k}\frac{j(\zeta _b ^a q^A,q^B)}{(q^B ; q^B)_\infty j(\zeta _b ^a q^{A+\frac{B}{2}},q^B)}\sum_{n\in\Z}\frac{(-1)^n\zeta_b ^{an} q^{\frac{Bn^2}{2}+An}}{1-\zeta_b ^{-a} q^{(B-A)n-\frac{B}{2}}}=0.
\end{equation*}
\end{prop}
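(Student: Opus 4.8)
The plan is to recognize the quantity in Proposition \ref{lim0} as the Appell--Lerch contribution in the decomposition \eqref{tailid2} and to show that it vanishes in the limit. Concretely, I would specialize \eqref{tailid2} at $x=\zeta_b^a q^A$ with $q$ replaced by $q^B$, so that $x\sqrt q=\zeta_b^a q^{A+B/2}$ and $q/x=\zeta_b^{-a}q^{B-A}$. Since $j(x,q)/(q)_\infty=(x,q/x;q)_\infty$ and, by the definition of $m$, the inner sum $\sum_{n\in\Z}\frac{(-x)^nq^{n^2/2}}{1-x^{-1}q^{n-1/2}}$ equals $j(x\sqrt q,q)\,m(x^{-2},q,x\sqrt q)$, the quantity in Proposition \ref{lim0} is, under this specialization, exactly $(x,q/x;q)_\infty\,m(x^{-2},q,x\sqrt q)$, i.e.\ $x^{3}$ times the third summand of \eqref{tailid2}. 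Thus it suffices to prove that this product tends to $0$.

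Next I would isolate the two competing factors. Writing $q=\zeta_k^h e^{-t}$ with $t\to 0^+$, the hypothesis $\tfrac hk\in\mathcal Q_{a,b,A,B}$, namely $b\mid k$ and $(k,B)\mid(\tfrac{ak}{b}+Ah)$, is precisely what forces one of the factors $1-\zeta_b^aq^{A+Bj}$ or $1-\zeta_b^{-a}q^{B-A+Bj}$ of $(x,q/x;q)_\infty=(\zeta_b^aq^A,\zeta_b^{-a}q^{B-A};q^B)_\infty$ to degenerate at $q=\zeta_k^h$; equivalently $j(x,q)$ acquires a genuine zero at this cusp, as in the analysis of \cite{br}. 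The role of the half-integer shift $z=x\sqrt q$ is to keep the Appell--Lerch factor under control: the summand denominators $1-x^{-1}q^{n-1/2}$ and the theta $j(x\sqrt q,q)$ degenerate only when $x$ lies on the half-shifted lattice $\pm q^{\Z-1/2}$, which is disjoint from the integer lattice $q^{\Z}$ responsible for the $\mathcal Q$-poles. Because $j(\pm x\sqrt q,q)\neq 0$, the branch of $\sqrt q$ is immaterial, and in the generic situation $m(x^{-2},q,x\sqrt q)$ stays bounded, so that the product tends to $0$.

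The hard part will be quantitative. Both factors are infinite products / theta quotients that grow or decay exponentially in $1/t$ as $q\to\zeta_k^h$, so the shape ``(a factor tending to $0$)$\times$(a bounded factor)'' is not by itself conclusive: one must compare the leading exponential orders at the cusp, using the modular transformations of $\eta$ and $\vartheta$ (via the Jacobi triple product identity already invoked above) in the spirit of the case-by-case zero/pole estimates of \cite{br}. The main obstacle is to show that the order of vanishing contributed by the degenerate Pochhammer factor strictly dominates the exponential growth of the remaining factors and of $m(x^{-2},q,x\sqrt q)$. I expect the most delicate case to be the degenerate one where $q^{B/2}$ itself tends to a root of unity (that is, $2\mid hB$): then the half-shifted lattice can collapse onto $q^{\Z}$ and $m(x^{-2},q,x\sqrt q)$ may develop a simple pole along the radial path, so one has to balance that pole against the zero of $(x,q/x;q)_\infty$, exploiting the freedom in the branch of $\sqrt q$ to dodge the pole whenever possible and carrying out a direct asymptotic comparison in the remaining subcase.
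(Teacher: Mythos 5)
Your reduction is exactly the paper's: the quantity in Proposition \ref{lim0} is the Appell--Lerch term of \eqref{tailid2} specialized at $x=\zeta_b^a q^A$ in base $q^B$, i.e.\ it equals $(x,q^B/x;q^B)_\infty\, m\bigl(x^{-2},q^B,xq^{B/2}\bigr)$, and the proof consists of playing the zero of $j(\zeta_b^a q^A,q^B)$ forced by $\frac hk\in\mathcal{Q}_{a,b,A,B}$ against the non-degeneracy of the half-shifted theta $j(\zeta_b^a q^{A+\frac B2},q^B)$ and of the summand denominators. (Incidentally, your reading silently corrects a typo: the printed exponent $(B-A)n-\frac B2$ in the Proposition should be $Bn-A-\frac B2$ to agree with \eqref{tailid2}, and that is the version actually used in the proof of Theorem \ref{pole}.) What you defer as ``the hard part''---the comparison of exponential orders at the cusp---is precisely the step the paper does not carry out either: its entire proof is the remark that $j(\zeta_b^a\zeta_k^{h(A+\frac B2)},\zeta_k^{hB})$ does not vanish for $\frac hk\in\mathcal{Q}_{a,b,A,B}$, after which the proof of Lemma 4.2 of \cite{br} is asserted to carry over verbatim; that lemma is where the $\eta$- and $\vartheta$-transformation estimates live (exponential decay of the theta quotient beating the at most polynomial growth of the bilateral sum). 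So your outline is closed either by citing that lemma or by reproducing its computation; no idea beyond what you describe is missing.

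Your caution about the degenerate branch is warranted, and there you are in fact more careful than the paper. For $\frac hk\in\mathcal{Q}_{a,b,A,B}$ and the canonical branch of $q^{B/2}$ one computes $\bigl(\zeta_b^a\zeta_k^{h(A+\frac B2)}\bigr)^{k'}=(-1)^{hB'}$, so the limiting characteristic of the denominator theta lies on the lattice $\langle\zeta_k^{hB}\rangle$ exactly when $hB'$ is even (this, rather than $2\mid hB$, is the precise criterion); in that case both the theta quotient and some summand denominators degenerate, and the qualitative argument breaks down. Replacing $q^{B/2}$ by $-q^{B/2}$ multiplies the above $k'$-th power by $(-1)^{k'}$, and since $(hB',k')=1$ the two branches cannot both be degenerate; so one can always ``dodge the pole'' as you propose, and this implicit choice is what makes the paper's blanket claims that the theta ``never vanishes'' and that the square root ``does not matter'' legitimate. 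In sum: same approach as the paper, correct skeleton, with the decisive quantitative estimate outsourced exactly where the paper outsources it, namely to \cite{br}.
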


The proof is completely analogous to the proof of Lemma 4.2 in \cite{br}, since $j(\zeta _b ^a \zeta_k ^{h\left(A+\frac{B}{2}\right)},\zeta_k ^{hB})$ never vanishes if $\frac{h}{k}\in \mathcal{Q}_{a,b,A,B}$.

\begin{proof}[Proof of Theorem \ref{pole}]
After using Equation \eqref{tailid2} and Proposition \ref{lim0}, it remains to show that 
\begin{equation*}
(\zeta_b ^{-a} \zeta_k ^{h(B-A)};\zeta_k ^{hB})_{n-1} (\zeta_b ^a \zeta_k ^{hA}; \zeta_k ^{hB})_{n}
\end{equation*} 
vanishes for $n>k'$. We have $xq^n=1$ for $n\in \N$ if and only if 
\begin{equation*}
\tfrac{ak}{b}+hA+hBn \equiv 0 \pmod k. 
\end{equation*}
For $\frac{h}{k}\in\mathcal{Q}_{a,b,A,B}$ this equation has a unique solution $\pmod{k'}$. Thus for some 
$n_0\in\{ 1,\dots ,k'\}$ we have $xq^{n_0}=1$ and therefore $(x)_n=0$ for every $n> n_0$.
\end{proof}

\subsection{Convergent geometric series}
Now we examine the case when the radial limit of $g_3$ exists as a convergent sum. 

\begin{proof}[Proof of Theorem \ref{conv}]
We show that the sum $g_3 (\zeta_b ^a \zeta_k ^{hA}, \zeta_k ^{hB} )$ converges absolutely in the above case by rearranging terms 
\begin{multline*}
g_3 (\zeta_b ^a \zeta_k ^{hA} , \zeta_k ^{hB} ) 
= \sum_{n\geq 1}\frac{\zeta_k ^{hBn(n-1)}}{(\zeta_b ^a \zeta_k ^{hA} , \zeta_b ^{-a} \zeta_k ^{h(B-A)} ; \zeta_k ^{hB})_{n}}
= \sum_{m\geq 0} \sum_{j=1}^{k'}\frac{\zeta_k ^{hB(mk'+j)(mk'+j-1)}}{(\zeta_b ^a \zeta_k ^{hA} , \zeta_b ^{-a} \zeta_k ^{h(B-A)} ; \zeta_k ^{hB})_{mk'+j}}\\
= \sum_{m\geq 0} \sum_{j=1}^{k'}\frac{\zeta_k ^{hBj(j-1)}}{(\zeta_b ^{a}\zeta_k ^{hA} , \zeta_b ^{-a} \zeta_k ^{h(B-A)} ; \zeta_k ^{hB})_{k'}^m(\zeta_b ^{a}\zeta_k ^{hA} , \zeta_b ^{-a} \zeta_k ^{h(B-A)} ; \zeta_k ^{hB})_{j}} \\
= \sum_{j=1}^{k'}\frac{\zeta_k ^{hBj(j-1)}}{ (\zeta_b ^a \zeta_k ^{hA}, \zeta_b ^{-a} \zeta_k ^{h(B-A)} ; \zeta_k ^{hB})_{j}}\sum_{m\geq 0} \frac{1}{(\zeta_b ^a \zeta_k ^{hA},\zeta_b ^{-a} \zeta_k ^{h(B-A)} ; \zeta_k ^{hB} )_{k'} ^m }.
\end{multline*}
The second sum is a geometric series that converges absolutely if and only if 
\begin{equation*}
|(\zeta_b ^a \zeta_k ^{hA},\zeta_b ^{-a} \zeta_k ^{h(B-A)} ; \zeta_k ^{hB} )_{k'}|>1.
\end{equation*}
Now consider the expression $(x,q/x;q)_{k'}$ for $q$ a primitive $k'$-th root of unity.
We use the factorization
\begin{equation*}
1-x^{k'}=\prod_{j=0}^{k'-1}(1-xq^j)
\end{equation*}
to obtain 
\begin{multline*}
(x,q/x;q)_{k'} = \prod_{j=0}^{k'-1}(1-xq^{j})(1-x^{-1}q^{j+1})=\prod_{j=0}^{k'-1}(1-xq^j)\prod_{j=0}^{k'-1}(1-x^{-1}q^j)\\
=(1-x^{k'})(1-x^{-k'})=2-2\operatorname{Re}(x^{k'}).
\end{multline*}
Setting $x=\zeta_b ^a \zeta_k ^{hA}$, we get
\begin{equation*}
\operatorname{Re}(x^{k'})=\cos (2\pi k' \left(\tfrac{a}{b}+\tfrac{Ah}{k}\right) ) 
\end{equation*}
and thus
\begin{equation*}
(\zeta_b ^a \zeta_k ^{hA},\zeta_b ^{-a} \zeta_k ^{h(B-A)} ; \zeta_k ^{hB} )_{k'} = 2-2\cos (2\pi k' \left(\tfrac{a}{b}+\tfrac{Ah}{k}\right)). 
\end{equation*}
Therefore the series converges absolutely if and only if
\begin{equation*}
\cos (2\pi k' \left(\tfrac{a}{b}+\tfrac{Ah}{k}\right)) < \tfrac{1}{2},
\end{equation*}
or equivalently
\begin{equation*}
\{ k' \left(\tfrac{a}{b}+\tfrac{Ah}{k}\right)\} \in (\tfrac{1}{6},\tfrac{5}{6}).
\end{equation*}
By Abel's Theorem, the limit of the series equals the radial limit of $g_3$ from within the unit disc.
\end{proof}

\subsection{Shifting by third roots of unity}
\begin{proof}[Proof of Theorem \ref{kang}]
We use Kang's identity \eqref{kangsid}. Note that under the assumptions of Theorem \ref{kang}, if $\frac{a}{b}\in\{\tfrac13, \tfrac23\}$, we always have $A\neq 0$ and $B\nmid A$, so that all three summands of \eqref{kangsid} are well-defined for $|x|,|q| <1$. 
First we show that $\frac{h}{k}\notin \mathcal{Q}_{3a+b,3b,A,B}\cup \mathcal{Q}_{3a+2b,3b,A,B}$, so that the summands of $g_3 (\zeta_3 \zeta_b ^a \zeta_k ^{hA}, \zeta_k ^{hB})$ and $g_3 (\zeta_3 ^2 \zeta_b ^a \zeta_k ^{hA}, \zeta_k ^{hB})$ have non-vanishing denominators. Here we abuse notation and write $\mathcal{Q}_{a,b,A,B}$ for $\mathcal{Q}_{\frac{a}{(a,b)},\frac{b}{(a,b)},A,B}$ if $(a,b)>1$. Suppose that
\begin{equation*}
(B,k) | \tfrac{ak}{b} + Ah + \tfrac{\ell k}{3} 
\end{equation*}
for some $\ell\in \{ 1,2\}$.
This means that $k' (\tfrac{a}{b} + \tfrac{Ah}{k}+ \tfrac{\ell}{3})$ is an integer. Since $k'$ is not divisible by $3$, it follows that $\{k' \left(\frac{a}{b}+\frac{Ah}{k}\right)\}\in \{\tfrac{1}{3} , \tfrac{2}{3}\} $, which contradicts our assumption. \\
Since we also have 
\begin{equation*}
\{k' \left(\tfrac{a}{b}+\tfrac{Ah}{k}+\tfrac{1}{3}\right)\}, \{k' \left(\tfrac{a}{b}+\tfrac{Ah}{k}+\tfrac{2}{3}\right)\} \in (\tfrac{1}{6}, \tfrac{5}{6}), 
\end{equation*}
we see by Theorem \ref{conv} that $g_3 (\zeta_3 \zeta_b ^a \zeta_k ^{hA}, \zeta_k ^{hB})$ and $g_3 (\zeta_3 ^2 \zeta_b ^a \zeta_k ^{hA}, \zeta_k ^{hB})$ are absolutely convergent sums. Together with \eqref{kangsid} the statement follows.
\end{proof}

\subsection{Future directions and discussion}
Apart from the case where $g_3(x,q)$ diverges and $k'$ is divisible by $3$, we are left with the case where $\frac{h}{k}\notin \mathcal{Q}_{a,b,A,B}$ and $\{ k' \left(\tfrac{a}{b}+\tfrac{Ah}{k}\right)\} \in \{\tfrac{1}{6},\tfrac{5}{6}\}$. 
Again we set $x=\zeta_b ^a \zeta_k ^{hA}$ and $q=\zeta_k ^{hB}$.
Then $x$ is a $6k'$-th root of unity of order $\pm 1\pmod{6}$, say $x=\zeta_{6k'} ^{\pm1+6\ell}=\zeta_{6k'}^{\pm1}\zeta_{k'}^{\ell}$, $\ell\in\Z$. Notice that since $(k',hB')=1$, we can always find an integer $n$ satisfying $hB'n\equiv -\ell \pmod{k'}$, meaning that
\begin{equation*}
q^n=\zeta_{k'}^{hB'n}=\zeta_{k'}^{-\ell}.
\end{equation*}
By iteratively applying \eqref{feq}, we can reduce this to the cases $x=\zeta_{6k'}^{\pm1}$. Also, by easy computation, we can obtain
\begin{equation}\label{inv}
g_3(x^{-1},q)=g_3(xq,q)=-x^3 g_3(x,q)-x^2 -x.
\end{equation} 
Because of this identity, it is enough to consider the case $x=\zeta_{6k'}$. Now we obtain
\begin{equation*}
q=\zeta_{k'} ^{hB'} =x^{6hB'}.
\end{equation*}

Assume that $hB'\equiv 1 \pmod{k'}$, i.e.~$q=\zeta_{k'}=x^6.$ In this case, we can use the following mock theta ``conjecture'' (see \cite{gmi}, eq.~(7.3)):
\begin{equation*}
g_3(x,x^6) = -\frac{1}{2x}+\frac{x}{2}g_3 (x^3, x^6) + \frac{(x^2;x^2)_\infty ^4}{2x (x;x)_\infty ^2 (x^6; x^6)_\infty }.
\end{equation*}
First, we can apply Theorem \ref{conv} to $g_3(x^3,x^6)$, because $g_3 (x^3, x^6)=g_3 (\zeta_{2k'}, \zeta_{k'})$ and $k'\cdot\frac{1}{2k'}=\frac{1}{2}\in\left(\frac{1}{6},\frac{5}{6}\right)$. Thus we have
\begin{equation*}
 g_3 (\zeta_{2k'}, \zeta_{k'})=\frac{4}{3}\sum_{j=1}^{k'}\frac{\zeta_{k'} ^{j(j-1)}}{(\zeta_{2k'} ; \zeta_{k'})_{j}^{2}}.
\end{equation*}
Next, we look at the remaining modular term. It can be easily checked that
\begin{equation*}
\frac{(x^2;x^2)_\infty ^4}{(x;x)_\infty ^2 (x^6; x^6)_\infty }=(-x;x)_\infty^4(x;x)_\infty(x,x^2,x^3,x^4,x^5;x^6)_\infty,
\end{equation*} 
which vanishes for $x=\zeta_{6k'}$.\\

Combining all of the above, we obtain
\begin{equation*}
\lim_{q\rightarrow \zeta_{k}^h} g_3 (\zeta_b ^a q^A, q^B)=-\frac{1}{2\zeta_{6k'}}+  \frac{2\zeta_{6k'}}{3}\sum_{j=1}^{k'}\frac{\zeta_{k'} ^{j(j-1)}}{(\zeta_{2k'} ; \zeta_{k'})_{j}^{2}}
\end{equation*}
if $\frac{h}{k}\notin \mathcal{Q}_{a,b,A,B}$, $ k' \left(\tfrac{a}{b}+\tfrac{Ah}{k}\right) =\tfrac{1}{6}$, and $hB'\equiv 1\pmod{k'}$.\\

If $hB'\not\equiv 1\pmod{k'}$, one can use the relation \cite{gmi}, eq.~(6.1) between $g_2$ and $g_3$ and the results for $g_2$ in \cite{br} to compute the modular forms to be subtracted and the radial limits of $g_3$. Unfortunately, this method is quite laborious and not straightforward. It would be interesting to find direct approach to these cases.

\begin{bibdiv}
\begin{biblist}

\bib{lost}{book}{
    AUTHOR = {Andrews, G.},
    author={Berndt, B.},
     TITLE = {Ramanujan's lost notebook. {P}art {II}},
 PUBLISHER = {Springer, New York},
      YEAR = {2009},
     PAGES = {xii+418},
      ISBN = {978-0-387-77765-8},
   MRCLASS = {11-02 (05A30 11F11 11F27 11M36 33-02 33D15)},
  MRNUMBER = {2474043 (2010f:11002)},
MRREVIEWER = {Jeremy Lovejoy},
}

\bib{orgdef}{article}{
    AUTHOR = {Andrews, G.}
		author = {Hickerson, D.},
     TITLE = {Ramanujan's ``lost'' notebook. {VII}. {T}he sixth order mock
              theta functions},
   JOURNAL = {Adv. Math.},
  FJOURNAL = {Advances in Mathematics},
    VOLUME = {89},
      YEAR = {1991},
    NUMBER = {1},
     PAGES = {60--105},
      ISSN = {0001-8708},
     CODEN = {ADMTA4},
   MRCLASS = {11B65},
  MRNUMBER = {1123099 (92i:11027)},
MRREVIEWER = {Bruce C. Berndt},
       DOI = {10.1016/0001-8708(91)90083-J},
       URL = {http://dx.doi.org/10.1016/0001-8708(91)90083-J},
}

\bib{bklmr}{article}{
    AUTHOR = {Bajpai, J.},
		author={Kimport, S.},
		author={Liang, J.},
		author={Ma, D.},
		author={Ricci, J.},
     TITLE = {Bilateral series and {R}amanujan's radial limits},
   JOURNAL = {Proc. Amer. Math. Soc.},
  FJOURNAL = {Proceedings of the American Mathematical Society},
    VOLUME = {143},
      YEAR = {2015},
    NUMBER = {2},
     PAGES = {479--492},
      ISSN = {0002-9939},
   MRCLASS = {11F37 (33D15)},
  MRNUMBER = {3283638},
       DOI = {10.1090/S0002-9939-2014-12249-0},
       URL = {http://dx.doi.org/10.1090/S0002-9939-2014-12249-0},
}

\bib{bfr}{article}{ 
    AUTHOR = {Bringmann, K.},
		author ={Folsom, A.},
		author={Rhoades, R.},
     TITLE = {Partial theta functions and mock modular forms as
              {$q$}-hypergeometric series},
   JOURNAL = {Ramanujan J.},
  FJOURNAL = {Ramanujan Journal. An International Journal Devoted to the
              Areas of Mathematics Influenced by Ramanujan},
    VOLUME = {29},
      YEAR = {2012},
    NUMBER = {1-3},
     PAGES = {295--310},
      ISSN = {1382-4090},
     CODEN = {RAJOF9},
   MRCLASS = {11F37 (11F27 33D15)},
  MRNUMBER = {2994103},
MRREVIEWER = {Zhengyu Mao},
       DOI = {10.1007/s11139-012-9370-1},
       URL = {http://dx.doi.org/10.1007/s11139-012-9370-1},
}

\bib{br2}{article}{
author = {{Bringmann}, K.},
author={{Rolen}, L.},
    title ={Half-integral weight Eichler integrals and quantum modular forms},
  journal = {ArXiv e-prints},
	year={2014},
archivePrefix = {"arXiv"},
   eprint = {1409.3781},
 primaryClass = {"math.NT"},
 keywords = {Mathematics - Number Theory, 11F03, 11F37, 11F67},
     year = {2014},
    month = {sep},
   adsurl = {http://adsabs.harvard.edu/abs/2014arXiv1409.3781B},
  adsnote = {Provided by the SAO/NASA Astrophysics Data System}
}

\bib{br}{article}{
author = {{Bringmann}, K.},
author={{Rolen}, L.},
    title = {Radial limits of mock theta functions},
  journal = {ArXiv e-prints},
	year={2014},
archivePrefix = {"arXiv"},
   eprint = {1409.3782},
 primaryClass = {"math.NT"},
 keywords = {Mathematics - Number Theory, 11F37, 11F50},
     year = {2014},
    month = {sep},
   adsurl = {http://adsabs.harvard.edu/abs/2014arXiv1409.3782B},
  adsnote = {Provided by the SAO/NASA Astrophysics Data System}
}

\bib{clr}{article}{
 AUTHOR = {Choi, D.}, 
author={Lim, S.},
author={Rhoades, R.},
     TITLE = {Mock modular forms and quantum modular forms},
		 YEAR = {preprint}
}

\bib{for}{article}{
   AUTHOR = {Folsom, A.},
	AUTHOR = {Ono, K.},
	AUTHOR = {Rhoades, R.},
     TITLE = {Mock theta functions and quantum modular forms},
   JOURNAL = {Forum Math. Pi},
  FJOURNAL = {Forum of Mathematics. Pi},
    VOLUME = {1},
      YEAR = {2013},
     PAGES = {e2, 27},
      ISSN = {2050-5086},
   MRCLASS = {11F37 (33D15)},
  MRNUMBER = {3141412},
MRREVIEWER = {Rolf Berndt},
}

\bib{gmi}{incollection}{
AUTHOR = {Gordon, B.},
author={McIntosh, R.},
     TITLE = {A survey of classical mock theta functions},
 BOOKTITLE = {Partitions, $q$-series, and modular forms},
    SERIES = {Dev. Math.},
   VOLUME = {23},
     PAGES = {95--144},
 PUBLISHER = {Springer, New York},
      YEAR = {2012},
   MRCLASS = {33D15 (11F27 11F37)},
 MRNUMBER = {3051186},
MRREVIEWER = {Olivier Mallet},
       URL = {http://dx.doi.org/10.1007/978-1-4614-0028-8_9},
       }

\bib{gor}{article}{
AUTHOR = {Griffin, M.},
author={Ono, K.},
author={Rolen, L.},
     TITLE = {Ramanujan's mock theta functions},
   JOURNAL = {Proc. Natl. Acad. Sci. USA},
  FJOURNAL = {Proceedings of the National Academy of Sciences of the United
              States of America},
    VOLUME = {110},
      YEAR = {2013},
    NUMBER = {15},
     PAGES = {5765--5768},
      ISSN = {1091-6490},
   MRCLASS = {11F27},
  MRNUMBER = {3065809},
MRREVIEWER = {Min Ho Lee},
       DOI = {10.1073/pnas.1300345110},
       URL = {http://dx.doi.org/10.1073/pnas.1300345110},
}

\bib{kang}{article}{
 AUTHOR = {Kang, S.-Y.},
     TITLE = {Mock {J}acobi forms in basic hypergeometric series},
   JOURNAL = {Compos. Math.},
  FJOURNAL = {Compositio Mathematica},
    VOLUME = {145},
      YEAR = {2009},
    NUMBER = {3},
     PAGES = {553--565},
      ISSN = {0010-437X},
   MRCLASS = {33D15 (11F37 11F50)},
  MRNUMBER = {2507741 (2010f:33022)},
MRREVIEWER = {Sander Zwegers},
       DOI = {10.1112/S0010437X09004060},
       URL = {http://dx.doi.org/10.1112/S0010437X09004060}
}

\bib{mor}{article}{
author = {{Mortenson}, E.},
    title = {Ramanujan's radial limits and mixed mock modular bilateral $q$-hypergeometric series},
  journal = {ArXiv e-prints},
archivePrefix = {"arXiv"},
   eprint = {1309.4162},
 primaryClass = {"math.NT"},
 keywords = {Mathematics - Number Theory, 11B65, 11F27},
     year = {2013},
    month = {sep},
   adsurl = {http://adsabs.harvard.edu/abs/2013arXiv1309.4162M},
  adsnote = {Provided by the SAO/NASA Astrophysics Data System}
}

\bib{rh}{article}{
  AUTHOR = {Rhoades, R.},
     TITLE = {On {R}amanujan's definition of mock theta function},
   JOURNAL = {Proc. Natl. Acad. Sci. USA},
  FJOURNAL = {Proceedings of the National Academy of Sciences of the United
              States of America},
    VOLUME = {110},
      YEAR = {2013},
    NUMBER = {19},
     PAGES = {7592--7594},
      ISSN = {1091-6490},
   MRCLASS = {11F27 (11B65)},
  MRNUMBER = {3067051},
       DOI = {10.1073/pnas.1301046110},
       URL = {http://dx.doi.org/10.1073/pnas.1301046110}
}

\bib{zag}{incollection}{
    AUTHOR = {Zagier, D.},
     TITLE = {Quantum modular forms},
 BOOKTITLE = {Quanta of maths},
    SERIES = {Clay Math. Proc.},
    VOLUME = {11},
     PAGES = {659--675},
 PUBLISHER = {Amer. Math. Soc., Providence, RI},
      YEAR = {2010},
   MRCLASS = {11F99 (33D90 57M27)},
  MRNUMBER = {2757599 (2012a:11066)},
MRREVIEWER = {Sander Zwegers},
}

\bib{zud}{article}{
title={On three theorems of Folsom, Ono and Rhoades},
  author={Zudilin, Wadim},
  journal={Proceedings of the American Mathematical Society},
  year={2014}
}

\bib{zwe}{article}{
   author = {{Zwegers}, S.},
    title = {Mock Theta Functions},
  journal = {ArXiv e-prints},
archivePrefix = {"arXiv"},
   eprint = {0807.4834},
 primaryClass = {"math.NT"},
 keywords = {Mathematics - Number Theory, 11F37, 11F50, 11E45, 11F27},
     year = {2008},
    month = {jul},
   adsurl = {http://adsabs.harvard.edu/abs/2008arXiv0807.4834Z},
  adsnote = {Provided by the SAO/NASA Astrophysics Data System}
}

\end{biblist}
\end{bibdiv}

\end{document}